\def\be#1{\begin{equation}\label{#1}}
\def\bas{\begin{align*}}
\def\eas{\end{align*}}
\def\bi{\begin{itemize}}
\def\ei{\end{itemize}}
\theoremstyle{plain}
   \newtheorem{theorem}[subsection]{Theorem}
   \newtheorem{remark}[subsection]{Remark}
   \newtheorem{proposition}[subsection]{Proposition}
   \newtheorem{lemma}[subsection]{Lemma}
\newcommand{\RR}{\mathbb{R}}
\DeclareMathOperator{\supp}{supp}
\DeclareMathOperator{\diag}{diag}
\author{Reuben Wheeler}
\date{}
\address{Maxwell Institute of Mathematical Sciences and the School of Mathematics, University of
Edinburgh, JCMB, The King's Buildings, Peter Guthrie Tait Road, Edinburgh, EH9 3FD, Scotland}
\email{s1641717@sms.ed.ac.uk}
\title[A note on the $L^p$ integrability of a class B\^ochner-Riesz kernels]{A note on the $L^p$ integrability of a class of B\^ochner-Riesz kernels}
\begin{document}

\maketitle

\begin{abstract} For a general compact variety $\Gamma$ of arbitrary codimension, one can
consider the $L^p$ mapping properties of the B\^ochner-Riesz multiplier 
$$
m_{\Gamma, \alpha}(\zeta) \ = \ {\rm dist}(\zeta, \Gamma)^{\alpha} \phi(\zeta)
$$
where $\alpha > 0$ and $\phi$ is an appropriate smooth cut-off function. Even for the sphere $\Gamma = {\mathbb S}^{N-1}$,
the exact $L^p$ boundedness range remains a central open problem in Euclidean Harmonic Analysis. In this
paper we consider the $L^p$ integrability of the
B\^ochner-Riesz convolution kernel for a particular class of varieties (of any codimension). For a subclass of these varieties the range of $L^p$ integrability of the kernels differs substantially from the $L^p$ boundedness range of the corresponding B\^ochner-Riesz multiplier operator.
\end{abstract}

\section{Introduction}

When one studies Fourier multiplier operators $T_m$, with a compactly supported multiplier $m$, $\widehat{T_m(f)}(\xi)=m(\xi)\hat{f}(\xi)$, a necessary condition for $T_m$ to be bounded on $L^p(\RR^N)$  is that the kernel $K=\check{m}\in L^p$. To see this, one can take a Schwartz function $\phi$ with $\hat{\phi}(\xi)=1$ for $\xi\in \supp m$ so that $T_m \phi = \check{m}$.

In a remarkable paper of Heo, Nazarov, and Seeger, \cite{heoNazSee}, they show that this natural necessary condition is also sufficient for $T_m$ to be bounded on $L^p$ in the range $1<p<\frac{2(N-1)}{N+1}$ 
whenever $m$ is a compactly supported \textit{radial} multiplier. The Radial Multiplier Conjecture states that for radial multipliers $m$, $\check{m}\in L^p$ implies $T_m$ is bounded on $L^p$ holds in the range $1<p<\frac{2N}{N+1}$.

The canonical example of a compactly supported radial multiplier is the classical B\^{o}chner-Riesz multiplier \[m_\alpha(\xi)=\left(1-|\xi|^2\right)_+^{\alpha}.\] The corresponding convolution kernel $K_\alpha=\check{m}_\alpha$ is well known to lie in $L^p$ for \[p>p_{\alpha,N}=\frac{2N}{N+1+2\alpha};\] see \cite{herz}. 
We also have, from the famous work of C. Fefferman on the ball multiplier, \cite{feffBall}, the following necessary condition. For $N\geq 2$, if the multiplier operator $T_{m_{\alpha}}$ is bounded on $L^p$ for $p\neq 2$, then $\alpha>0$. The B\^{o}chner-Riesz Conjecture states that $T_{m_\alpha}$ is bounded on $L^p(\RR^N)$ if and only if \[p_{\alpha,N}<p<p_{\alpha,N}'\mbox{ and }\alpha>0.\]
The restrictions on the parameters may be equivalently expressed as \[\alpha>\max\left\lbrace N\left|\frac{1}{2}-\frac{1}{p}\right|-\frac{1}{2} ,0\right\rbrace.\]

The radial multiplier Conjecture is a significant generalisation 
of the B\^{o}chner-Riesz Conjecture. However, the result in \cite{heoNazSee} gives no new improvements on the B\^{o}chner-Riesz Conjecture.

In this paper, we examine a class of compactly supported multipliers arising as B\^{o}chner-Riesz multipliers associated to a family of embedded varieties, $\Gamma$, in $\RR^N$ of arbitrary codimension. 

We consider a compact neighbourhood about the origin, $\Gamma$, of the $n$-dimensional variety \[\left\lbrace (\xi,\Psi(\xi))\right\rbrace\subset \RR^N,\]
parametrised as the graph of smooth $\Psi:\RR^n\rightarrow \RR^L$. We write $\Psi:\RR^n\rightarrow \RR^L$ component-wise as \[\Psi(\xi)=\left(\psi_1(\xi),\psi_2(\xi),\ldots,\psi_{L}(\xi))\right).\] 

For a general variety $\Gamma$ and some $\alpha>0$,  \[m(\zeta)=d(\zeta,\Gamma)^\alpha\phi(\zeta)\]
is called the B\^ochner-Riesz multiplier with exponent $\alpha>0$. Here 
$\phi$ is some appropriate bump function whose support intersects $\Gamma$.

We may assume that $\Gamma$ is paramaterised as the graph of a smooth function $\Psi$, with $\phi$ supported in a small neighbourhood of $0$ and $\Psi$ chosen such that such that $\Psi(0)=0$ and $\nabla\Psi(0)=0$. This is achieved using a smooth partition of unity, translating and rotating. 
One can reduce the study of the $L^p$ mapping properties of the operators $T_{m}$ defined by the above multipliers to the study of the $L^p$ mapping properties of related multiplier operators $T_{m_{\Gamma,\alpha}}$, whose multipliers are given by \begin{equation}\label{eq:genBRmult}m_{\Gamma,\alpha}(\zeta)=m_{\Gamma,\alpha}(\xi,\eta)=\phi(\xi)|\eta-\Psi(\xi)|^\alpha\chi(\eta-\Psi(\xi)).\end{equation} 
Here $\zeta=(\xi,\eta)\in\RR^{n+L}=\RR^N$, $\phi:\RR^n\rightarrow \RR$ and $\chi:\RR^L\rightarrow \RR$ are appropriate radial bump functions with small supports contained in $B^n(0,\delta)$ and $B^L(0,\delta)$, respectively.
See \cite{mythesis} for details.

For a particular class of surfaces, we determine the precise $L^p(\RR^N)$ integrability range of the corresponding convolution kernel $K_{\Gamma,\alpha}=\check{m}_{\Gamma,\alpha}$. For a subclass of these surfaces we also show that the range of integrability for the convolution kernel is strictly larger than the $L^p(\RR^N)$ boundedness range of the B\^{o}chner-Riesz operator $T_{m_{\Gamma,\alpha}}$. Although compactly supported, these multipliers are not radial multipliers.

Let $\Gamma$ be a compact piece of the graph $\{(\xi, \Psi(\xi))\in\RR^N: \xi \in {\mathbb R}^n\}$ taken in a small
neighbourhood of the origin. Here \begin{equation}\label{eq:psiHom}
\Psi(x) = (a_1 |\xi|^{d_1}, a_2 |\xi|^{d_2}, \ldots, a_L |\xi|^{d_L})\end{equation}
and the $d_j\geq 2$ are distinct. The surface $\Gamma$ has dimension $n$ and codimension $L$. 

\begin{theorem}\label{thm:suffKrad}\label{thm:kLpNecRadial}Consider $\Gamma$ as given as a neighbourhood about the origin of the graph given by \eqref{eq:psiHom} with some coefficient $a_j\neq 0$. Then the convolution kernel $K_{\Gamma,\alpha}=\check{m}_{\Gamma,\alpha}\in L^p(\RR^N)$ if and only if $p>\frac{L+n}{L+\alpha+\frac{n}{2}}$.\end{theorem}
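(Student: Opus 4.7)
The plan is to factor the kernel cleanly and then reduce to two sharp estimates on the factors.

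\textbf{Kernel factorisation.} By the change of variable $\eta \mapsto \eta + \Psi(\xi)$ in the inverse Fourier transform of $m_{\Gamma,\alpha}$, the kernel splits as a product:
$$K_{\Gamma,\alpha}(x, y) = I(x, y)\, J(y),$$
where
$$I(x, y) = \int_{\RR^n}\phi(\xi)\,e^{2\pi i(x\cdot\xi + y\cdot\Psi(\xi))}\,d\xi, \qquad J(y) = \int_{\RR^L}|\eta|^{\alpha}\chi(\eta)\,e^{2\pi i y\cdot\eta}\,d\eta.$$
Consequently $\|K_{\Gamma,\alpha}\|_{L^p}^p = \int_{\RR^L}|J(y)|^p\,\|I(\cdot,y)\|_{L^p(\RR^n)}^p\,dy$, reducing the problem to estimating each factor.

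\textbf{Analysis of the factors.} The function $J$ is the Fourier transform of a compactly supported radial function with a Riesz-type singularity of order $\alpha$ at the origin, so by classical theory (cf.\ \cite{herz})
$$|J(y)|\simeq (1+|y|)^{-L-\alpha}\quad\text{for }|y|\text{ large},$$
with matching lower bound on a set of directions of positive measure in $S^{L-1}$. The heart of the proof is the sharp two-sided bound
$$\|I(\cdot,y)\|_{L^p(\RR^n)}^p\;\simeq\;(1+|y|)^{n(2-p)/2},\qquad 1\leq p\leq 2.$$
To establish this I would dyadically decompose $\phi=\sum_{k\geq 0}\phi_k$ with $\supp\phi_k\subset\{|\xi|\sim 2^{-k}\}$ and rescale $\xi=2^{-k}\tilde\xi$. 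The $k$th piece becomes
$$I_k(x,y)\;=\;2^{-kn}\,F\!\bigl(2^{-k}x,\;2^{-kd_1}y_1,\ldots,2^{-kd_L}y_L\bigr),\qquad F(X,Y)\;=\;\int\phi_0(\tilde\xi)\,e^{2\pi i(X\cdot\tilde\xi+Y\cdot\Psi(\tilde\xi))}\,d\tilde\xi,$$
an oscillatory integral localised to $|\tilde\xi|\sim 1$. On this annulus the Hessian of $Y\cdot\Psi$ has size $|Y|$ and, thanks to the hypothesis that some $a_j\neq 0$ and that the $d_j$ are distinct, is non-degenerate for $Y/|Y|$ outside a measure-zero subset of $S^{L-1}$. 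Classical stationary phase then yields $|F(X,Y)|\lesssim (1+|Y|)^{-n/2}$ on the critical-point image, a set of Lebesgue measure $\sim (1+|Y|)^n$, with rapid decay elsewhere; hence $\|F(\cdot,Y)\|_{L^p}^p\simeq (1+|Y|)^{n(2-p)/2}$. Undoing the rescaling and summing in $k$---dominated by the scale $k^*$ satisfying $|2^{-k^*\vec d}y|\sim 1$---recovers the claim.

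\textbf{Assembly and converse.} Inserting the two asymptotics gives
$$\|K_{\Gamma,\alpha}\|_{L^p}^p\;\simeq\;\int_{\RR^L}(1+|y|)^{-p(L+\alpha)+n(2-p)/2}\,dy,$$
which converges at infinity exactly when $p(L+\alpha)+pn/2>L+n$, i.e.\ $p>(L+n)/(L+\alpha+n/2)$. Necessity follows from the reverse inequalities in the two factor estimates, restricted to the positive-measure set of $y$ on which both lower bounds are simultaneously available; on that set the integrand is bounded below by a positive constant times $(1+|y|)^{-p(L+\alpha)+n(2-p)/2}$, and the integral diverges at the boundary of the critical range.

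\textbf{Main obstacle.} The technical core is the two-sided asymptotic on $\|I(\cdot,y)\|_{L^p}$, in particular controlling the stationary-phase analysis of $F$ uniformly in the direction $\omega=y/|y|$. Certain exceptional directions may cause the Hessian of $\omega\cdot\Psi$ to degenerate on $|\tilde\xi|\sim 1$; a secondary angular decomposition combined with van der Corput-type bounds on the degenerate slices should suffice to integrate over these directions without losing the sharp scaling. A second subtlety is the passage from dyadic-piece estimates on $\|I_k(\cdot,y)\|_{L^p}^p$ to the full $\|I(\cdot,y)\|_{L^p}^p$, which must be handled differently for $p<1$ (direct $p$-additivity) and $p\geq 1$ (triangle inequality, after reducing to the dominant scale $k^*$).
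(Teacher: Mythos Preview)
Your factorisation $K=I\cdot J$ and the asymptotic for $J$ are exactly what the paper uses, and your lower-bound strategy for necessity (stationary phase on a region where the critical point is nondegenerate) is also the paper's. The gap is in the sufficiency direction, specifically in the claim that on the annulus $|\tilde\xi|\sim 1$ the Hessian of $Y\cdot\Psi$ is nondegenerate for $Y/|Y|$ outside a null set. This is false as soon as $L_1\ge 2$. For instance, with $n=1$, $d_1=2$, $d_2=4$, the second derivative $2Y_1+12Y_2\tilde\xi^2$ vanishes at $\tilde\xi^2=-Y_1/(6Y_2)$, which lies in the fixed annulus $[\tfrac12,2]$ for every $Y$ in the open cone $Y_1/Y_2\in[-24,-3/2]$. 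More generally, the vanishing locus is a one-parameter family of hyperplanes through the origin in $Y$-space, and their union is an open cone, not a null set. So for a positive-measure set of directions there is always a $\tilde\xi$ on the annulus where your stationary-phase bound $|F|\lesssim |Y|^{-n/2}$ fails, and you cannot treat this as a measure-zero perturbation. Your ``main obstacle'' paragraph acknowledges degeneracies but treats them as exceptional; they are generic. A second, smaller issue: your fixed-$y$ asymptotic $\|I(\cdot,y)\|_{L^p}^p\simeq(1+|y|)^{n(2-p)/2}$ cannot hold as stated when $L_1<L$, since $I$ does not see the last $L-L_1$ coordinates of $y$; you would need $(1+|y'|)^{n(2-p)/2}$ with $y'=(y_1,\ldots,y_{L_1})$.

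The paper does not attempt a uniform fixed-$y$ bound on $\|I(\cdot,y)\|_{L^p}$. Instead it exploits the radiality of $\Psi$ to reduce $I$ to a one-dimensional oscillatory integral via polar coordinates and the asymptotic of $\widehat\sigma$, obtaining phases $\Phi_{x,y,\pm}(r)=\pm|x|r+\sum_j y_j r^{d_j}$. It then introduces the van~der~Corput quantity $H(x,y)=\inf_{r}\sum_{j}|\Phi^{(j)}(r)|^{1/j}$, which is bounded below by $|y|^{1/d_{L_1}}$ and gives $|I_\pm(x,y)|\lesssim |x|^{-(n-1)/2}H(x,y)^{-1}$ regardless of degeneracy. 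The sufficiency then follows from a joint dyadic decomposition in $(H,|y|)$ together with a level-set estimate of Arkhipov--Karatsuba--Chubarikov type for the measure of $\{x:H(x,y)\sim 2^l\}$. This sidesteps entirely the nondegeneracy problem that blocks your argument, and also avoids the anisotropic-scaling issue in your ``dominant scale $k^*$'' (there is no single $k^*$ since the components $2^{-kd_j}y_j$ pass through $1$ at different scales).
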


Without loss of generality, we henceforth assume that $a_j\neq 0$, for $1\leq j\leq L_1$, $d_1<d_2<\ldots<d_{L_1}$, and $a_j=0$, for $j>L_1$, for some $L_1\leq L$. We will also make use of the following Lemma, which is proved in Section \ref{sec:prelim}.

\begin{lemma}
\label{lem:ajAreOne}To prove Theorem \ref{thm:suffKrad}, it suffices to assume that the non-zero coefficients $a_j$ are all equal to $1$. In particular, we may assume that $a_1=a_2=\ldots=a_{L_1}=1$.
\end{lemma}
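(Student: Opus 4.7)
The strategy is to absorb the coefficients $a_j$ via a linear change of variables on the $\eta$-side of the multiplier.

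Let $T\colon \RR^L \to \RR^L$ denote the diagonal linear map with diagonal entries $(a_1, \ldots, a_{L_1}, 1, \ldots, 1)$, so that $\Psi(\xi) = T\tilde{\Psi}(\xi)$, where $\tilde{\Psi}(\xi) = (|\xi|^{d_1}, \ldots, |\xi|^{d_{L_1}}, 0, \ldots, 0)$ is the normalised profile with all nonzero coefficients equal to $1$. First, I would consider the pulled-back multiplier
\[
\tilde m(\xi, \eta) := m_{\Gamma, \alpha}(\xi, T\eta) = \phi(\xi)\,|T(\eta - \tilde{\Psi}(\xi))|^\alpha\,\chi(T(\eta - \tilde{\Psi}(\xi))).
\]
Standard Fourier calculus, substituting $\eta \mapsto T\eta$ in the inverse Fourier transform defining $K_{\Gamma, \alpha}$ and subsequently $y \mapsto T^{-1}y$ in the physical-side norm, yields $\|K_{\Gamma, \alpha}\|_p = |\det T|^{1 - 1/p}\,\|\check{\tilde m}\|_p$, so $K_{\Gamma, \alpha} \in L^p$ if and only if $\check{\tilde m} \in L^p$.

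The pulled-back multiplier $\tilde m$ has exactly the same form as $m_{\tilde{\Gamma},\alpha}$, except that the radial bump $|u|^\alpha \chi(u)$ is replaced by $|Tu|^\alpha \chi(Tu)$. Since $T$ is an invertible diagonal map, it is bi-Lipschitz on $\RR^L$, and these two bumps are pointwise comparable: both are compactly supported near the origin, smooth elsewhere, and vanish precisely to order $\alpha$ at the origin.

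The main obstacle, and the substantive content of the reduction, is that the proof of Theorem \ref{thm:suffKrad} for the normalised case $a_j=1$ must in fact establish the stated $L^p$ integrability range for any multiplier of the form $\phi(\xi)\,W(\eta - \tilde{\Psi}(\xi))$ where $W$ shares the qualitative features of $|u|^\alpha \chi(u)$ listed above, rather than requiring the precise functional form. Since the arguments to be given in the following sections will rely only on these qualitative features — compact support, smoothness away from the origin, and $\alpha$-order vanishing at the origin — the same proof applies verbatim to $\tilde m$, yielding the identical $L^p$ integrability range for $K_{\Gamma,\alpha}$ as for $K_{\tilde{\Gamma},\alpha}$ and completing the reduction. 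Alternatively, one may factor $K_{\Gamma,\alpha}(x,y) = G(y)\,I_{\tilde\Gamma}(x,Ty)$ where $G=\widehat{|\cdot|^\alpha\chi}$ is independent of the $a_j$'s, and verify directly that the change $G(y)\rightsquigarrow G(T^{-1}y)$ does not alter the $L^p$ range of the product against $I_{\tilde\Gamma}$; this is again a consequence of the qualitative (radiality, polynomial decay) features of $G$ being preserved under the bi-Lipschitz map $T^{-1}$.
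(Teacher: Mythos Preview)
Your alternative route---factoring $K_{\Gamma,\alpha}(x,y)=A_\alpha(y)\,\tilde k(x,Ty)$ and then comparing $A_\alpha(y)$ with $A_\alpha(T^{-1}y)$ after the change of variables $y\mapsto T^{-1}y$---is exactly the paper's argument. The paper makes this precise by splitting into two regions: on the set $S_0$ where $|y|$ is large one has $|A_\alpha(y)|\sim|y|^{-L-\alpha}\sim|T^{-1}y|^{-L-\alpha}\sim|A_\alpha(T^{-1}y)|$, so the $L^p$ norms match up to a constant; on the bounded complement $S_1$ both kernels lie in every $L^p$ by non-stationary phase. You should make this two-region split explicit, since the comparison $A_\alpha(y)\sim A_\alpha(T^{-1}y)$ is only valid asymptotically and fails near the origin. (Also, a small correction: $A_\alpha\circ T^{-1}$ is no longer radial once $T$ is anisotropic; what survives, and what you actually need, is the two-sided bound $|A_\alpha(T^{-1}y)|\sim |y|^{-L-\alpha}$.)

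Your primary route---pulling back the multiplier to $\tilde m(\xi,\eta)=\phi(\xi)\,|T(\eta-\tilde\Psi(\xi))|^\alpha\chi(T(\eta-\tilde\Psi(\xi)))$ and arguing that the later proofs depend only on qualitative features of the bump---is a legitimate observation, but it is not really a proof of the lemma as a standalone reduction. It amounts to saying ``the lemma is unnecessary because the main argument already covers the general case''; that is true here (the only input from $A_\alpha$ in Sections~\ref{sec:suffKLp} and~\ref{sec:necMinMp} is the two-sided asymptotic $|A_\alpha|\sim|\cdot|^{-L-\alpha}$, which $\widehat{|T\cdot|^\alpha\chi(T\cdot)}$ shares), but it makes the lemma hostage to details of a proof that has not yet been given. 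The paper's direct kernel-level comparison is cleaner because it establishes $K_{\Gamma,\alpha}\in L^p\iff K_{\tilde\Gamma,\alpha}\in L^p$ without forward reference.
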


The proofs of necessity and sufficiency in Theorem \ref{thm:suffKrad} are different. The kernel $K_{\Gamma,\alpha}$ is given as an oscillatory integral with an explicit phase. To prove that \begin{equation}\label{eq:necKLp}K_{\Gamma,\alpha}\in L^p\implies p>\frac{L+n}{L+\alpha+\frac{n}{2}}\end{equation} we restrict our attention to a region where the critical point of the phase is non-degenerate and use stationary phase techniques to obtain pointwise estimates on the size of the kernel. To obtain the sufficient condition \begin{equation}\label{eq:suffKLp} p>\frac{L+n}{L+\alpha+\frac{n}{2}}\implies K_{\Gamma,\alpha}\in L^p\end{equation} we adapt methods developed in work by A. Karatsuba, G. I. Arkhipov, and V. Chubarikov, \cite{arkhipovNoGaps}, which were used to settle a problem arising from Tarry's problem in Number Theory; rather than pointwise control on the size of the kernel, we form a dyadic partition of space according to its size.

\begin{remark} Theorem \ref{thm:suffKrad} holds for more general varieties including, for example, the curves of standard type found in \cite{SteinWaingerProblemsInHA}; see \cite{mythesis}.
\end{remark}

We now consider results concerning the $L^p(\RR^N)$ boundedness of the multiplier operator $T_{m_{\Gamma,\alpha}}$. We recall some known results and state a sharp result, which is a consequence of our theorem and a sharp restriction theorem.

 For general varieties $\Gamma$, it is well known that the B\^{o}chner-Riesz problem is connected to another fundamental problem in Euclidean Harmonic Analysis, the Fourier restriction problem, which in this context is to determine the $L^p(\RR^N)\rightarrow L^q(\Gamma,\mu)$ mapping properties of the restriction operator $Rf=\hat{f}|_{\Gamma}$, where $\mu$ denotes surface measure on $\Gamma$. Even in the original setting of $\Gamma =S^{N-1}$, as proposed by Stein in the mid 1960's, the Fourier restriction problem 
is unsolved for $N\geq 3$.

Progress with the B\^{o}chner-Riesz Conjecture has historically paralleled progress with the Restriction Conjecture.  Tao established  that the B\^{o}chner-Riesz Conjecture implies the Restriction Conjecture on the sphere, \cite{taoBRimpliesR}; the table contained therein also outlines some of the parallel progress in these two areas.

It is well known that sharp $L^p$ estimates for B\^{o}chner-Riesz multiplier operators, $T_{m_{\Gamma,\alpha}}$, follow from $L^2$ Fourier restriction estimates for $\Gamma$;
\begin{equation}\label{eq:l2restIneq}\left(\int\left|\hat{f}(\xi,\Psi(\xi))\right|^2\phi(\xi)d\xi\right)^\frac{1}{2}\leq C\|f\|_{L^q(\RR^N)}.\end{equation}
This implication for sharp estimates and surrounding ideas date back to Fefferman's thesis, \cite{feffActa}, where the analysis is for the sphere $\Gamma=S^{N-1}$.

Recall that $m_{\Gamma,\alpha}$ was defined only for $\alpha>0$. For varieties $\Gamma$ of arbitrary codimension we have the following result of G. Mockenhoupt, \cite{mockBR}:
\begin{theorem}\label{thm:rest2BR} Let $\Gamma$ be a compact piece near the origin of the graph
 \[\lbrace \left(\xi, \Psi(\xi)\right); \xi \in \RR^n\rbrace\subset\RR^n\times \RR^{L}.\]
 Suppose that the restriction inequality \eqref{eq:l2restIneq} holds. 
Then, for $1\leq p\leq q$ or $q'\leq p <\infty$, the  B\^{o}chner-Riesz multiplier $m_{\Gamma,\alpha}$ defined in \eqref{eq:genBRmult}
defines a multiplier operator $T_{m_{\Gamma,\alpha}}$ which is bounded on $L^p$ for \[\alpha>\max\left\lbrace(n+L)\left|\frac{1}{p}-\frac{1}{2}\right|-\frac{L}{2},0\right\rbrace.\]\end{theorem}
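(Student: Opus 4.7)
The plan is to follow the classical Stein--Tomas-type argument adapted to varieties of arbitrary codimension as in \cite{mockBR}. I first perform a dyadic decomposition
\[
m_{\Gamma,\alpha} \;=\; \sum_{k \geq 0} m_k,
\]
where each $m_k$ is a smooth cut-off of $m_{\Gamma,\alpha}$ to the shell $\{(\xi,\eta) : |\eta - \Psi(\xi)| \sim 2^{-k}\}$, satisfying $\|m_k\|_\infty \lesssim 2^{-k\alpha}$. The conclusion reduces to showing that $\|T_{m_k}\|_{L^p\to L^p}$ decays geometrically in $k$ at a rate giving convergence of the series.

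For each piece, the trivial Plancherel bound gives $\|T_{m_k}\|_{L^2\to L^2} \lesssim 2^{-k\alpha}$. An off-diagonal estimate comes from the restriction hypothesis \eqref{eq:l2restIneq}. Writing $\eta = \Psi(\xi)+t$ and foliating the $2^{-k}$-neighbourhood of $\Gamma$ by transverse translates, $T_{m_k}$ is represented as the average
\[
T_{m_k} f(x,y) \;=\; \int_{|t|\sim 2^{-k}} e^{i y \cdot t}\, E\bigl[m_k(\cdot,\Psi(\cdot)+t)\,\hat f(\cdot,\Psi(\cdot)+t)\bigr](x,y)\,dt
\]
of extension operators $E$ (dual to the restriction operator $R$). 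Applying Minkowski's inequality in $L^{q'}$, the extension estimate dual to \eqref{eq:l2restIneq}, and Cauchy--Schwarz in $t$ (over a set of measure $\sim 2^{-kL}$) yields
\[
\|T_{m_k}\|_{L^2\to L^{q'}} \;\lesssim\; 2^{-k\alpha - kL/2},
\]
which a $TT^*$ argument applied to $T_{|m_k|^{1/2}}$ upgrades to $\|T_{m_k}\|_{L^q\to L^{q'}} \lesssim 2^{-k\alpha - kL}$.

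It remains to combine these off-diagonal estimates with the Plancherel bound to obtain the diagonal bound
\[
\|T_{m_k}\|_{L^p\to L^p} \;\lesssim\; 2^{-k\left[\alpha + L/2 - (n+L)|1/p - 1/2|\right]}
\]
in the range $q'\leq p<\infty$, which is summable in $k$ precisely when $\alpha > (n+L)|1/p - 1/2| - L/2$. To do so, I would use Stein's complex interpolation on an analytic family $\{T_{m_z}\}_{z\in\CC}$ obtained from $m_{\Gamma,\alpha}$ by replacing the real exponent $\alpha$ by a complex parameter $z$, with a Gamma-function normalization; the family is arranged so that on one boundary line of the strip the multiplier distributionally reduces (up to smooth factors) to surface measure on $\Gamma$, where the restriction/extension estimate supplies the off-diagonal bound, while on the other boundary the multiplier is smooth with a convolution kernel $\check m_z$ whose size is controlled by the smoothness of the cut-offs $\phi,\chi$. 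The complementary range $1\leq p\leq q$ then follows by duality since $m_{\Gamma,\alpha}$ is real-valued.

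The delicate step will be the last one: recovering a \emph{diagonal} $L^p\to L^p$ estimate from the inherently off-diagonal extension estimate forces the analytic family to be set up carefully so that interpolation at $z=\alpha$ produces diagonal bounds with the sharp geometric decay rate, rather than merely $L^{p_\theta}\to L^{p_\theta^*}$ estimates as would result from a naive Riesz--Thorin application. The Gamma-function normalization has to be chosen to absorb the polynomial growth in $\mathrm{Im}(z)$ introduced by Stein's method. The remaining steps---the dyadic decomposition, the Plancherel bound, the Fubini-plus-extension calculation, and the duality reduction---are essentially routine.
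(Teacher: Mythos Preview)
The paper does not prove Theorem~\ref{thm:rest2BR}; it is quoted from Mockenhoupt~\cite{mockBR}, and even the generalisation Theorem~\ref{thm:rest2BRsubs} is said to follow ``along known lines of argument'' with details deferred to~\cite{mythesis}. So there is no in-paper argument to compare against.

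Your outline follows the standard Fefferman--Stein scheme, and the off-diagonal ingredients---the dyadic decomposition, the Plancherel bound, the foliation-plus-extension computation giving $\|T_{m_k}\|_{2\to q'}\lesssim 2^{-k(\alpha+L/2)}$, and the $TT^*$ upgrade to $\|T_{m_k}\|_{q\to q'}\lesssim 2^{-k(\alpha+L)}$---are all correct. But the step you yourself flag as delicate is not resolved by your proposal. A single application of Stein (or Riesz--Thorin) interpolation between an off-diagonal endpoint $L^q\to L^{q'}$ and any diagonal endpoint $L^s\to L^s$ cannot yield a diagonal bound at an intermediate point: the interpolated exponents satisfy $1/p_\theta - 1/r_\theta = (1-\theta)(1/q-1/q')$, which vanishes only at $\theta=1$. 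Hence your analytic family---one boundary at the surface-measure value of $z$ (giving $L^q\to L^{q'}$), the other at a smooth multiplier (giving $L^s\to L^s$)---does not close the argument as stated. The customary remedy, going back to Fefferman's treatment of the sphere and used in~\cite{mockBR}, is a \emph{spatial localisation}: $\check m_k$ is essentially supported in a ball of radius $\sim 2^k$ (transverse thickness $2^{-k}$ in Fourier, together with non-stationary phase in the tangential variables). Decomposing $f=\sum_Q f_Q$ over cubes $Q$ of side $\sim 2^k$, each $T_{m_k}f_Q$ lives essentially in a bounded dilate of $Q$; one applies the restriction-based $L^q\to L^2$ bound to each $f_Q$, uses H\"older on $Q$ to convert between $L^p$, $L^q$ and $L^2$, and sums the almost-disjoint outputs in $\ell^p$. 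This localisation, not analytic interpolation alone, is the missing mechanism for the diagonal $L^p\to L^p$ estimate.
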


In particular, when $p\leq q<2$, we see that $T_{m_{\Gamma,\alpha}}$ is bounded on $L^p(\RR^N)$ if $p>\frac{L+n}{L+\alpha+\frac{n}{2}}$, which is precisely the $L^p$ integrability range for the B\^{o}chner-Riesz kernels $K_{\Gamma,\alpha}$ considered in Theorem \ref{thm:suffKrad}. Hence, for the varieties considered in Theorem \ref{thm:suffKrad}, we obtain sharp $L^p$ estimates for $T_{m_{\Gamma,\alpha}}$, provided the Fourier restriction inequality \eqref{eq:l2restIneq} holds.

In this paper, we observe that there is a class of surfaces for which the range of $p$ such that the B\^{o}chner-Riesz kernel $K_{m_{\Gamma,\alpha}}=\check{m}_{\Gamma,\alpha}\in L^p$ differs from the $L^p$ boundedness range for $T_{m_{\Gamma,\alpha}}$. These examples are instances of varieties $\Gamma$ given as a compact neighbourhood about the origin of \[ \left\lbrace\left(\xi,\widetilde{\Psi}(\xi),0\right)\right\rbrace,\] with $\widetilde{\Psi}:\RR^n\mapsto \RR^{L_1}$. These lie in a hyperplane in $\RR^N$ if $L_1<L$, i.e. if there are some $0$ components of the graphing function. In this case it is well known that the Fourier restriction inequality \eqref{eq:l2restIneq} can not hold for any $1<q$. 
Nevertheless, a restriction inequality may hold within the ambient hyperplane $\RR^{n+L_1}$. Indeed, let us write \[ \left\lbrace(\xi,\Psi(\xi))\in\RR^{n}\times\RR^{L}\right\rbrace=\left\lbrace(\xi,\widetilde{\Psi}(\xi),0)\in\RR^{n}\times\RR^{L_1}\times\RR^{L-L_1}\right\rbrace,\]
where $\widetilde{\Psi}:\RR^n\rightarrow \RR^{L_1}$ is smooth. We suppose \begin{equation}\label{eq:l2restrIneq2}\left(\int_{\RR^n}|\hat{g}(\xi,\widetilde{\Psi}(\xi))|^2\phi(\xi)d\xi\right)^\frac{1}{2}\leq C\|g\|_{L^q(\RR^{n+L_1})}\end{equation}
holds for some $1<q<2$ and some constant $C$. Then we have the following generalisation of Theorem \ref{thm:rest2BR}.

\begin{theorem}\label{thm:rest2BRsubs}Let $\Gamma$ be a compact piece near the origin of the graph
\[\left\lbrace \left(\xi, \widetilde{\Psi}(\xi),0\right); \xi \in \RR^n\right\rbrace\subset  \RR^n\times  \RR^{L_1}\times\RR^{L_2}\] and $\Gamma'$ be the corresponding projection to \[\left\lbrace \left(\xi, \widetilde{\Psi}(\xi)\right); \xi \in \RR^n\right\rbrace\subset\RR^n\times  \RR^{L_1}.\]
Suppose that the $L^{q}\left(\RR^{n+L_1}\right)\rightarrow L^2(\Gamma, \mu)$ restriction inequality \eqref{eq:l2restrIneq2} holds. Then, for $1\leq p\leq q$ or $q'\leq p <\infty$, the  B\^{o}chner-Riesz multiplier $m_{\Gamma,\alpha}$ given by \eqref{eq:genBRmult} defines a multiplier operator $T_{m_{\Gamma,\alpha}}$ which is bounded on $L^p$ if \[\alpha>\max\left\lbrace(n+L_1)\left|\frac{1}{p}-\frac{1}{2}\right|-\frac{L_1}{2},0\right\rbrace.\]
\end{theorem}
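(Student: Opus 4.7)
The proof adapts Mockenhoupt's argument from \cite{mockBR}, exploiting the fact that $\Gamma = \Gamma' \times \{0\} \subset \RR^{n+L_1} \times \RR^{L_2}$. The key structural observation is that $d(\zeta,\Gamma)^2 = d((\xi,\eta_1),\Gamma')^2 + |\eta_2|^2$, so that the $\delta$-neighborhood satisfies $N_\delta(\Gamma) \subset N_\delta(\Gamma') \times B_{L_2}(0,\delta)$. This inclusion allows us to derive Mockenhoupt's central tube estimate for $\Gamma$ with the codimension $L$ replaced by $L_1$, using the restriction hypothesis \eqref{eq:l2restrIneq2} for $\Gamma'$ in place of a (nonexistent) restriction inequality to $\Gamma$ in $\RR^N$.

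The technical heart of the proof is the tube estimate: for $f \in L^q(\RR^N)$ with $\hat f$ supported in $N_\delta(\Gamma)$,
\begin{equation*}
\|f\|_{L^2(\RR^N)} \lesssim \delta^{L_1/2} \|f\|_{L^q(\RR^N)}.
\end{equation*}
The plan is to first apply Plancherel in the $\eta_2 \leftrightarrow z$ variables to obtain
\begin{equation*}
\|f\|_2^2 \leq \int_{\RR^{L_2}} \int_{N_\delta(\Gamma')} \bigl|\widehat{f(\cdot,\cdot,z)}(\zeta')\bigr|^2 \, d\zeta' \, dz.
\end{equation*}
For each fixed $z$, the standard Mockenhoupt tube argument in $\RR^{n+L_1}$ — parametrize $N_\delta(\Gamma')$ by $\sigma'+v_1$ with $\sigma'\in\Gamma'$ and $|v_1|<\delta$, then apply \eqref{eq:l2restrIneq2} to the modulation $e^{-iv_1\cdot(x,y)}f(\cdot,\cdot,z)$ — contributes a factor $\delta^{L_1}$ and leaves $\|f(\cdot,\cdot,z)\|_{L^q(\RR^{n+L_1})}^2$. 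Integrating in $z$ produces the mixed norm $\|f\|_{L^2_z L^q_{xy}}^2$; Minkowski's integral inequality (valid since $q\leq 2$) and Bernstein's inequality in the $z$-variable (applicable because $\hat f$ is supported in $|\eta_2| < \delta \leq 1$, so $f(x,y,\cdot)$ is Fourier-supported in a ball of radius $\leq 1$ in $\eta_2$) then bound this mixed norm by $\|f\|_{L^q(\RR^N)}$.

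Once the tube estimate is in hand — structurally identical to Mockenhoupt's with $L$ replaced by $L_1$ — the rest of his proof transfers directly: dyadic decomposition of $m_{\Gamma,\alpha} = \sum_k m_k$ on shells $\{d(\cdot,\Gamma)\sim 2^{-k}\}$ (with $\|m_k\|_\infty \lesssim 2^{-k\alpha}$), and Stein's complex interpolation applied to the analytic family $m_z$ obtained by taking $\alpha$ complex, using the tube estimate and its dual to control the endpoints. This yields $L^p \to L^p$ boundedness in exactly the claimed range $\alpha > (n+L_1)|1/p-1/2|-L_1/2$.

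The main obstacle is the Bernstein step: a sharper application at scale $R = \delta$ would introduce an additional factor $\delta^{L_2(1/q-1/2)}$ which is superfluous for (indeed strictly stronger than) the stated conclusion, so one deliberately uses the cruder bound at scale $R \leq 1$ to keep the exponent of $\delta$ independent of $q$ and match the desired range precisely.
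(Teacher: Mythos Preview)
Your proposal is correct and follows exactly the approach the paper indicates: the paper does not actually prove Theorem~\ref{thm:rest2BRsubs} in the text, stating only that ``the proof of Theorem~\ref{thm:rest2BRsubs} follows along known lines of argument (those which establish Theorem~\ref{thm:rest2BR}); see \cite{mythesis}.'' Your adaptation of Mockenhoupt's tube estimate---Plancherel in $z$, the $\RR^{n+L_1}$ tube estimate slice-by-slice via \eqref{eq:l2restrIneq2}, then Minkowski and Bernstein in $z$---is precisely the natural way to carry this out, and your observation about deliberately using Bernstein at unit scale rather than scale $\delta$ to match the stated exponent is apt.
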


The proof of Theorem \ref{thm:rest2BRsubs} follows along known lines of argument (those which establish Theorem \ref{thm:rest2BR}); see \cite{mythesis}.

Let $\Gamma$ be a compact piece about the origin of the surface $\left\lbrace (\xi,\Psi(\xi))\right\rbrace$, with $\Psi$ given by \eqref{eq:psiHom} with $a_{1},a_2,\ldots,a_{L_1}\neq 0$, $d_1<d_2<\ldots<d_{L_1}$, and $L_1<L$. Then a necessary condition for the restriction inequality \eqref{eq:l2restrIneq2} to hold is \begin{equation}\label{eq:KnappCond}\frac{q'}{2}\geq 1+\frac{D}{n},\end{equation}
where $D=\sum_{j=1}^{L_1} d_j$.
This follows from a standard Knapp example. The following result has been established by J. Wright, \cite{wrightPrivCom}.
\begin{proposition}\label{prop:resthold}
With $\Gamma$ as above, if $d_1\geq n(L_1+1)$, then \eqref{eq:l2restrIneq2} holds if $\frac{q'}{2}\geq 1+\frac{D}{n}$.
\end{proposition}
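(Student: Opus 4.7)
The plan is to proceed by duality and the $TT^*$ method. The restriction estimate \eqref{eq:l2restrIneq2} is equivalent to the extension estimate $\|\widehat{f d\mu}\|_{L^{q'}(\RR^{n+L_1})} \leq C \|f\|_{L^2(\Gamma, \mu)}$, and then by $TT^*$ to the convolution estimate $\|F * K\|_{L^{q'}} \leq C \|F\|_{L^q}$, where
\[
K(x, y) \ = \ \int_{\RR^n} \phi(\xi)\, e^{-i(x\cdot \xi + y \cdot \widetilde{\Psi}(\xi))}\, d\xi.
\]
Via a diagonal change of variables analogous to Lemma \ref{lem:ajAreOne} one may take $\widetilde{\Psi}(\xi) = (|\xi|^{d_1}, \ldots, |\xi|^{d_{L_1}})$, so that $K$ is radial in $\xi$. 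Polar coordinates $\xi = r\omega$ then reduce $K$ to a one-variable oscillatory integral in $r$:
\[
K(x, y) \ = \ c \int_0^\infty \phi(r)\, r^{n-1}\, \mathcal{J}(|x|r)\, e^{-i\sum_j y_j r^{d_j}}\, dr,
\]
where $\mathcal{J}(s) = \int_{S^{n-1}} e^{-is\, \omega_1}\, d\omega$ is a standard Bessel-type kernel.

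One then runs the Stein--Tomas dyadic scheme: decompose $K$ according to the scale $|(x,y)| \sim 2^k$, bound each piece in $L^1 \to L^\infty$ by the pointwise decay of $K$ obtained via van der Corput / stationary phase applied to the radial integral, and in $L^2 \to L^2$ by exploiting that $\widehat{K}$ is concentrated near the $n$-dimensional surface $\Gamma$ (yielding a bound of order $2^{kL_1}$). Interpolating these estimates and summing the resulting geometric series gives $\|F * K\|_{L^{q'}} \leq C \|F\|_{L^q}$ in the range $q' > 2 + 2L_1/\gamma$, where $\gamma$ denotes the effective pointwise decay rate of $K$.

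The essential task is to produce decay sharp enough to close the argument at the Knapp endpoint $q' = 2(1 + D/n)$, which requires $\gamma = nL_1/D$. The worst-case direction for the decay of $K$ is the $y_{L_1}$-axis, on which the naive stationary phase bound yields only $|y_{L_1}|^{-n/d_{L_1}}$; since $d_{L_1} > D/L_1$, this single pointwise estimate is too weak. The plan is therefore to partition $(x, y)$-space into regions according to which monomial $y_j r^{d_j}$ dominates in the phase and to apply the van der Corput bound of order $d_j$ in each region. The hypothesis $d_1 \geq n(L_1+1)$ is used precisely here: it guarantees that in each region the improved decay from the order-$d_j$ van der Corput estimate, combined with the vanishing of the $r^{n-1}$ weight at $r=0$, provides enough effective decay to meet the Knapp threshold. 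This direction-by-direction accounting --- reminiscent of the affine-arclength van der Corput arguments used for restriction on non-degenerate polynomial curves --- is the principal technical obstacle, and is also the stage at which a Stein analytic-interpolation argument (damping by a complex-powered factor that vanishes to appropriate order in each region) is the most natural device for converting the regional pointwise decay into a uniform bound on $\|K\|_{L^1 \to L^\infty}$.
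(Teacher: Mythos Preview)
The paper does not contain a proof of Proposition~\ref{prop:resthold}. It is stated as a result communicated privately by J.~Wright (reference \cite{wrightPrivCom}) and is used as a black box in the proof of Proposition~\ref{prop:deLeuuwImp}. There is therefore no ``paper's own proof'' against which to compare your proposal.

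As for the proposal itself: the overall architecture you describe --- dualise, apply $TT^*$, exploit radiality to reduce $K$ to a one-variable oscillatory integral with weight $r^{n-1}$, and then run a Stein--Tomas interpolation between an $L^2$ estimate coming from the codimension-$L_1$ localisation and an $L^\infty$ estimate coming from oscillatory-integral decay --- is a reasonable template for results of this type and is consistent with how the $n=1$ case is handled in the literature the paper cites. However, two points in your write-up are loose. First, the decomposition you describe (``$|(x,y)|\sim 2^k$'' on the kernel side, with $L^2\to L^2$ bound ``of order $2^{kL_1}$'') is not the standard Stein--Tomas decomposition; the usual scheme localises $\widehat{K}$ to $2^{-j}$-neighbourhoods of $\Gamma$, giving an $L^2$ bound $\sim 2^{-jL_1}$ and an $L^1\to L^\infty$ bound controlled by the decay of the extension operator at scale $2^j$. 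You should make sure your bookkeeping is set up this way, or else explain the duality carefully. Second, and more substantively, your explanation of where the hypothesis $d_1\ge n(L_1+1)$ enters is heuristic rather than a mechanism. The quantity $n(L_1+1)$ strongly suggests that the correct route is to view the radial reduction as a restriction problem for the curve $r\mapsto(r, r^{d_1},\ldots,r^{d_{L_1}})$ in $\RR^{L_1+1}$ with measure $r^{n-1}\,dr$, and to compare with the known sharp range for Drury-type curve restriction; the inequality $d_1\ge n(L_1+1)$ is exactly what makes the weight $r^{n-1}$ dominated by (a suitable power of) the affine arclength / torsion weight for that curve, so that one can invoke the existing curve-restriction theory rather than reproving it region by region. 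Your ``direction-by-direction van der Corput plus damping'' plan may be workable, but as written it does not isolate a concrete inequality equivalent to $d_1\ge n(L_1+1)$, and that is the step that would need to be made precise.
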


Proposition \ref{prop:resthold} gives us a class of varieties of arbitrary codimension where the optimal $L^2$ Fourier restriction estimate holds. Combining this with Theorem \ref{thm:rest2BRsubs}, de Leuuw's Multiplier Theorem, duality, and Theorem \ref{thm:suffKrad} we establish the following sharp result; see Section \ref{sec:prelim}.

\begin{proposition}\label{prop:deLeuuwImp}Let $\Psi$ be given by \eqref{eq:psiHom} with $L_1<L$, $d_1<d_2,<\ldots,<d_{L_1}$, and $d_1\geq n(L_1+1)$. Let $\Gamma$ be a compact piece about the origin of the graph \[\left\lbrace(\xi,\Psi(\xi))\right\rbrace,\] and $\frac{q'}{2}= 1+\frac{D}{n}$. For $1\leq p \leq q$ or $q'\leq p <\infty$, $T_{m_{\Gamma,\alpha}}$ is bounded on $L^p$ if and only if
$\frac{L_1+n}{L_1+\alpha+\frac{n}{2}}<p<\frac{L_1+n}{L_1-\alpha+\frac{n}{2}}$.
\end{proposition}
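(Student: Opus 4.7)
The plan is to derive the ``if'' direction from Theorem \ref{thm:rest2BRsubs} combined with Proposition \ref{prop:resthold}, and the ``only if'' direction from the kernel necessary condition of Theorem \ref{thm:suffKrad}, after descending to $\RR^{n+L_1}$ via de Leuuw's multiplier theorem.

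For sufficiency, I would first use the hypothesis $d_1 \geq n(L_1+1)$ together with the choice $q'/2 = 1 + D/n$ to invoke Proposition \ref{prop:resthold}, which supplies the $L^q(\RR^{n+L_1}) \to L^2(\Gamma',\mu)$ restriction inequality \eqref{eq:l2restrIneq2}. Theorem \ref{thm:rest2BRsubs} then delivers $L^p$ boundedness of $T_{m_{\Gamma,\alpha}}$ for $p \in [1,q] \cup [q',\infty)$ whenever $\alpha > (n+L_1)|1/p - 1/2| - L_1/2$. A direct algebraic manipulation shows that this inequality is equivalent to $p$ lying in the stated open interval, handling the lower endpoint for $p<2$ and the upper endpoint for $p>2$.

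For necessity, the strategy is to descend the problem to $\RR^{n+L_1}$. Since $\alpha>0$ the multiplier $m_{\Gamma,\alpha}$ is continuous, so de Leuuw's multiplier theorem applies and shows that the restriction of $m_{\Gamma,\alpha}$ to the hyperplane $\{\eta_2 = 0\} \cong \RR^{n+L_1}$ is a bounded $L^p$-multiplier on $\RR^{n+L_1}$, with norm controlled by $\|T_{m_{\Gamma,\alpha}}\|_{L^p \to L^p}$. Because $\chi$ is radial on $\RR^L$, the restricted multiplier agrees, up to a radial bump factor on $\RR^{L_1}$, with $m_{\widetilde{\Gamma},\alpha}$ on $\RR^{n+L_1}$, where $\widetilde{\Gamma}$ is of the same model class \eqref{eq:psiHom} but with codimension $L_1$ and with all surviving coefficients non-zero. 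Combining the standard observation that a bounded compactly supported multiplier has $L^p$-integrable kernel with Theorem \ref{thm:suffKrad} applied to $\widetilde{\Gamma}$ (substituting $L \mapsto L_1$) then yields the lower bound $p > (L_1+n)/(L_1 + \alpha + n/2)$. The matching upper bound follows by duality, since $m_{\Gamma,\alpha}$ is real-valued, so $L^p$-boundedness and $L^{p'}$-boundedness of $T_{m_{\Gamma,\alpha}}$ are equivalent.

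The main technical point lies in this necessity step: one has to verify that restriction to $\{\eta_2 = 0\}$ produces a multiplier in the class to which Theorem \ref{thm:suffKrad} is applicable. This is essentially automatic because $\chi$ is radial (so the restricted cut-off remains a radial bump on $\RR^{L_1}$) and the projected variety $\widetilde{\Gamma}$ is directly a graph of the form \eqref{eq:psiHom} with $L_1$ non-zero components. The sufficiency side requires no additional work beyond assembling Proposition \ref{prop:resthold} and Theorem \ref{thm:rest2BRsubs}.
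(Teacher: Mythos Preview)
Your proposal is correct and follows essentially the same route as the paper: sufficiency via Proposition~\ref{prop:resthold} feeding into Theorem~\ref{thm:rest2BRsubs}, and necessity via de Leeuw's theorem restricting $m_{\Gamma,\alpha}$ to $\RR^{n+L_1}$, then invoking the kernel criterion of Theorem~\ref{thm:suffKrad} (with $L$ replaced by $L_1$) together with duality. Your explicit check that the restricted multiplier is again of the form~\eqref{eq:genBRmult} on $\RR^{n+L_1}$, using the radiality of $\chi$, is a detail the paper leaves implicit.
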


One expects these propositions to hold without the extra condition $d_1\geq n(L_1+1)$. For the curves described by the equation \eqref{eq:psiHom} when $n=1$ and $L_1=L$ (i.e. there are no vanishing coefficients) this is indeed the case; see \cite{DMRestrDegenCurves}.

In distinction to proposition \ref{prop:deLeuuwImp}, Theorem \ref{thm:suffKrad} implies that the convolution kernel $K_{\Gamma,\alpha}=\check{m}_{\Gamma,\alpha}$ lies in $L^p(\RR^N)$ if and only if $p>\frac{L+n}{L+\alpha+\frac{n}{2}}$. It is easily seen for $\alpha<\frac{n}{2}$ that $\frac{L_1+n}{L_1+\alpha+\frac{n}{2}}\geq \frac{L+n}{L+\alpha+\frac{n}{2}}$ if $L_1\leq L$, with strict inequality if $L_1\neq L$.
Hence, for the varieties in Theorem \ref{thm:suffKrad} with $L_1<L$, the $L^p$ boundedness range for $T_{m_{\Gamma,\alpha}}$ differs from the $L^p$ integrability range of $K_{\Gamma,\alpha}$.

Regarding the Fourier restriction problem for (a neighbourhood of the origin of) the moment curve, \[(t,t^2,t^3,\ldots,t^d),\] there is the initial result of Drury \cite{drury}, which tells us that the restriction inequality \eqref{eq:l2restIneq} holds for $q'\geq d(d+1)$. In the field, it was not known that this result was sharp until the discovery of \cite{arkhipovNoGaps}, where the necessary condition $q'\geq d(d+1)$ had been established. As mentioned previously, it is a method from the same paper that we adapt to show the implication \eqref{eq:suffKLp} of Theorem \ref{thm:suffKrad}: we form a dyadic partition of space according to the size of the kernel.

\textbf{Acknowledgements.} 
The author was supported by The Maxwell Institute Graduate School in Analysis and its Applications, a Centre for Doctoral Training funded by the UK Engineering and Physical Sciences Research Council (Grant EP/L016508/01), the Scottish Funding Council, Heriot-Watt University and the University of Edinburgh.

The author would like to thank Jim Wright for the introduction of the problem and for many helpful suggestions in the development of this paper. The author would also like to think Aswin Govindan-Sheri for a careful reading of the manuscript.

\section{Outline of paper}

Section \ref{sec:not} introduces notation and derives the B\^ochner-Riesz kernels. We also observe some basic properties of the kernels.

In Section \ref{sec:prelim} we show how Proposition \ref{prop:deLeuuwImp} follows from Theorem \ref{thm:rest2BRsubs} and Theorem \ref{thm:suffKrad} applied with reference to de Leeuw's Theorem. We also prove Lemma \ref{lem:ajAreOne}.

In Sections \ref{sec:suffKLp} and \ref{sec:necMinMp} we prove Theorem \ref{thm:suffKrad}. In Section \ref{sec:suffKLp}  we prove the sufficient condition for $K_{\Gamma,\alpha}\in L^p$, \eqref{eq:suffKLp}. In Section \ref{sec:necMinMp} the reverse implication, \eqref{eq:necKLp}, is proved, this is the necessary condition for $K_{\Gamma,\alpha}\in L^p$.

\section{Notation}\label{sec:not}
Throughout this paper $C$ will be used to denote a constant, its value may change from line to line. We use the notation $X\lesssim Y$ or $Y\gtrsim X$ if there exists some implicit constant $C$ such that $X\leq CY$. When we wish to highlight the dependence of the implied constant $C$ on some other parameter, say $C=C(M)$, we will use the notation $X\lesssim_M Y$. We use the notation $X\ll Y$ or $Y\gg X$ if there exists some suitable large constant $D$ such that $DX\leq Y$.

We denote by $B^m(x,r)$ the Euclidean ball of radius $r$ centred at $x$ in $\RR^m$, or simply $B(x,r)$ if the dimension is clear. 

We use the notation \[\mathcal{F}\left(f\right)(\xi)=\hat{f}(\xi)=\int e^{-2\pi i \xi \cdot x}f(x)dx,\] for the Fourier transform and likewise for the inverse Fourier transform we write \[\mathcal{F}^{-1}\left(g\right)(x)=\check{g}(x)=\int e^{2\pi i x \cdot \xi}g(\xi)d\xi.\]

We also denote by $\hat{\sigma}$ the Fourier transform of the surface measure of the sphere $S^{m-1}$:\[\hat{\sigma}(x)=\int_{S^{m-1}}e^{2\pi i x\cdot \omega}d\sigma(\omega).\]

For $p\in (1,\infty)$ we denote by $\mathcal{M}_p$ the space of Borel measurable functions $m$ for which the multiplier operator defined a priori by
$$f\mapsto \left(m\cdot \hat{f}\right)^{\check{}}$$
is bounded from $L^p$ to $L^p$.

Recall that we write $L=L_1+L_2$. Typically, in what follows $x,y$, and $z$ will denote points in $\RR^n$, $\RR^{L_1}$, and $\RR^{L_2}$, respectively. Since we may have that $L_2=0$ but integrate with respect to $dz$ we follow the convention that, when $L_2=0$, $\RR^{L_2}=\RR^{0}=\lbrace 0\rbrace$ and $dz$ is the counting measure. Similarly for $S^{0}$, which we regard as the set $\lbrace -1,1\rbrace$ equipped with the counting measure.

\label{sec:genBRmult}
All multiplier operators are expressible as convolution operators. 
We turn our sights to the convolution kernel of $T_m$, $K_{\Gamma,\alpha}=\check{m}$, where $m=m_{\Gamma,\alpha}$, as given by \eqref{eq:genBRmult}. Observe that \[K_{\Gamma,\alpha}(x,y,z)=\int_{\RR^L}\int_{\RR^n}e^{2\pi i \left(x\cdot \xi +(y,z)\cdot \eta\right)}m(\xi,\eta)d\xi d\eta\]
\[=\int_{\RR^L}\int_{\RR^n} e^{2\pi i \left(x\cdot \xi +(y,z)\cdot \eta\right)}\phi(\xi)|\eta-\Psi(\xi)|^\alpha\chi(\eta-\Psi(\xi))d\xi d\eta\]
\[=\int_{\RR^L}\int_{\RR^n} e^{2\pi i \left(x\cdot \xi +(y,z)\cdot (\eta+\Psi(\xi))\right)}\phi(\xi)|\eta|^\alpha\chi(\eta)d\xi d\eta\]
\begin{equation}\label{eq:stupidProd}=A_{\alpha}(y,z)\left(\int_{\RR^n} e^{2\pi i \left(x\cdot \xi +(y,z)\cdot \Psi(\xi)\right)}\phi(\xi)d\xi\right) \end{equation}
\begin{equation}\label{eq:kerProd}=A_\alpha(y,z)k(x,y),\end{equation}
where \[k(x,y)=\int_{\RR^n} e^{2\pi i \left(x\cdot \xi +y\cdot \widetilde{\Psi}(\xi)\right)}\phi(\xi)d\xi\mbox{ and }A_\alpha(y,z)=\int_{\RR^L} e^{2\pi i (y,z)\cdot \eta}|\eta|^\alpha\chi(\eta)d\eta.\]
Unlike $A_\alpha$, obtaining pointwise control $k$ is a delicate matter. Such pointwise control will only be required when we prove the necessary condition \eqref{eq:necKLp} in Section \ref{sec:necMinMp}, where we establish a lower bound on $|k|$ in a restricted region of $\RR^{n+L_1}$.

We now state bounds on the size of the function $A_\alpha$. 
\begin{lemma}\label{lem:Abound}\label{lem:Aasymptotic}For large $|(y,z)|$, there exist constants $0<c_\alpha\leq C_\alpha$ such that \begin{equation}\label{eq:AalphBnd}c_\alpha|(y,z)|^{-L-\alpha}\leq|A_\alpha((y,z))|\leq C_\alpha|(y,z)|^{-L-\alpha}.\end{equation}
\end{lemma}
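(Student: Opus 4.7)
The plan is to extract the leading-order asymptotic of $A_\alpha$ at infinity by comparing with the Fourier transform of the homogeneous distribution $|\eta|^\alpha$, whose singularity at the origin is exactly what drives the polynomial decay rate $|w|^{-L-\alpha}$. Write $w=(y,z)\in\RR^L$, and split
\[
|\eta|^\alpha\chi(\eta)\ =\ |\eta|^\alpha\ -\ g(\eta),\qquad g(\eta):=|\eta|^\alpha(1-\chi(\eta)).
\]
The key point is that $g\in C^\infty(\RR^L)$: $\chi\equiv 1$ near the origin, so $g$ vanishes identically near $0$, removing the only source of non-smoothness of $|\eta|^\alpha$. In the sense of tempered distributions,
\[
A_\alpha(w)\ =\ \widehat{|\eta|^\alpha}(w)\ -\ \hat{g}(w).
\]

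For the first term I would invoke the classical homogeneous-distribution identity
\[
\widehat{|\eta|^\alpha}(w)\ =\ c_{L,\alpha}\,|w|^{-L-\alpha},\qquad c_{L,\alpha}\ =\ 2^{L+\alpha}\pi^{L/2}\,\frac{\Gamma((L+\alpha)/2)}{\Gamma(-\alpha/2)},
\]
which holds as tempered distributions on $\RR^L$, and pointwise for $w\neq 0$; the constant $c_{L,\alpha}$ is nonzero provided $\alpha$ is not a nonnegative even integer, which is the generic B\^ochner--Riesz regime. For the second term I would exploit the smoothness of $g$: for any integer $N$ with $2N>L+\alpha$, iterated Laplacians $(-\Delta)^N g$ are continuous, vanish near the origin, and satisfy $(-\Delta)^N g(\eta)=O(|\eta|^{\alpha-2N})$ as $|\eta|\to\infty$, hence $(-\Delta)^N g\in L^1(\RR^L)$. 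Consequently $\widehat{(-\Delta)^N g}=(2\pi|w|)^{2N}\hat{g}(w)$ is bounded, yielding $|\hat{g}(w)|\lesssim_N|w|^{-2N}$.

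Combining the two contributions gives
\[
A_\alpha(w)\ =\ c_{L,\alpha}\,|w|^{-L-\alpha}\ +\ O\bigl(|w|^{-2N}\bigr)
\]
for large $|w|$. Choosing $N$ with $2N>L+\alpha$, the error is of strictly lower order, and since $c_{L,\alpha}\neq 0$ both bounds in \eqref{eq:AalphBnd} follow for $|w|$ larger than some threshold, with constants $c_\alpha,C_\alpha$ comparable to $|c_{L,\alpha}|$. The main subtle point I expect to address is the passage from the distributional identity $A_\alpha=\widehat{|\eta|^\alpha}-\hat{g}$ to a pointwise identity for $w\neq 0$: this is fine because $A_\alpha$ is the Fourier transform of a compactly supported distribution (hence real-analytic on $\RR^L$), $c_{L,\alpha}|w|^{-L-\alpha}$ is smooth away from $0$, and the bound on $\hat g$ implies it is also a continuous (in fact smooth) function on $\RR^L\setminus\{0\}$, so the three distributions may be identified as functions there. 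A parallel proof via the radial reduction $A_\alpha(w)=|w|^{-L-\alpha}\int_0^\infty s^{L+\alpha-1}\chi(s/|w|)\hat\sigma(s\omega)\,ds$ with $\omega=w/|w|$ is also available, but the route via homogeneous distributions makes the nonvanishing of the leading constant most transparent.
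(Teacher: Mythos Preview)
The paper does not actually prove this lemma; it calls the statement routine and defers to the author's thesis, so there is no in-paper argument to compare against. Your approach via the Fourier transform of the homogeneous tempered distribution $|\eta|^\alpha$ is correct and is one of the two standard routes (the other being the radial reduction to a one-dimensional Bessel-type integral that you mention at the end). The splitting $|\eta|^\alpha\chi=|\eta|^\alpha-g$ with $g\in C^\infty(\RR^L)$ vanishing near the origin, the $L^1$ bound on $(-\Delta)^Ng$ for $2N>L+\alpha$, and your justification for passing from the distributional identity to a pointwise one on $\RR^L\setminus\{0\}$ are all handled carefully and correctly.

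Your caveat that $c_{L,\alpha}\neq 0$ only when $\alpha\notin 2\NN$ is not merely a technicality: it exposes an implicit hypothesis in the lemma as stated. For $\alpha$ a positive even integer, $|\eta|^\alpha\chi(\eta)\in C_c^\infty(\RR^L)$, so $A_\alpha$ is Schwartz and the lower bound in \eqref{eq:AalphBnd} is simply false. The paper tacitly excludes these values, and indeed the necessity argument in Section~\ref{sec:necMinMp}, which relies on that lower bound, would not go through for them. You have been more careful on this point than the paper itself.
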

This is routine to establish and we omit the proof, it is presented in \cite{mythesis}.

\section{Preliminary Results}\label{sec:prelim}
\begin{proof}[Proof of Proposition \ref{prop:deLeuuwImp}]
By Proposition \ref{prop:resthold} we see that $L^p\rightarrow L^2(\Gamma',\mu)$ restriction holds for $1\leq p\leq q$. Therefore, we can apply Theorem \ref{thm:rest2BRsubs} to see that, where $1\leq p \leq q$ or $q'\leq p <\infty$, $T_{m_{\Gamma,\alpha}}$ is bounded on $L^p$ for $\frac{L_1+n}{L_1+\alpha+\frac{n}{2}}<p<\frac{L_1+n}{L_1-\alpha+\frac{n}{2}}$. 
Instead of considering the multiplier \[m_{\Gamma,\alpha}(\xi,\eta,\lambda)=|(\eta,\lambda)-\Psi(\xi)|^\alpha\chi\left((\eta,\lambda)-\Psi(\xi)\right)\phi(\xi),\] we restrict this continuous multiplier to $V=\RR^{n+L_1}$, corresponding to $\lambda=0$. de Leeuw's Theorem, \cite{deLeeuw}, tells us that if $m_{\Gamma,\alpha}\in\mathcal{M}_p(\RR^N)$, then $m_{\Gamma,\alpha}|_V\in \mathcal{M}_p(\RR^{n+L_1})$. This is the B\^{o}chner-Riesz multiplier on $\RR^{n+L_1}$ corresponding to the compact neighbourhood $\Gamma'$ about the origin of the surface $\left\lbrace (\xi,\widetilde{\Psi}(\xi))\right\rbrace$. We apply Theorem \ref{thm:kLpNecRadial} to see that the corresponding kernel is in $L^p$ only for $p>\frac{L_1+n}{L_1+\alpha+\frac{n}{2}}$. By duality, we must also have $p<\frac{L_1+n}{L_1-\alpha+\frac{n}{2}}$ for $T_{m_{\Gamma,\alpha}}$ to be bounded on $L^p$.
\end{proof}

In the following proof of Lemma \ref{lem:ajAreOne} we use the notation $(x,y)\in \RR^n\times \RR^L$. It is this proof and the arguments contained therein that justify the notation we will use thereafter of $(x,y,z)\in \RR^n\times\RR^{L_1}\times\RR^{L_2}$. With the notation $(x,y)\in \RR^n\times \RR^L$, we write \eqref{eq:stupidProd} as
\begin{equation}\label{eq:stupidProd2}K_{\Gamma,\alpha}(x,y)=A_{\alpha}(y)\int_{\RR^n} e^{2\pi i \left(x\cdot \xi +y\cdot \Psi(\xi)\right)}\phi(\xi)d\xi .\end{equation}
\begin{proof}[Proof of Lemma \ref{lem:ajAreOne}]
Let $\Psi(\xi)=\left(a_1|\xi|^{d_1},a_2|\xi|^{d_2},\ldots,a_L|\xi|^{d_L}\right)$ be given as in \eqref{eq:psiHom}, where $a_j\neq 0$ for $1\leq j \leq L_1$ and $d_1<d_2<\ldots<d_{L_1}$. We have that $K_{\Gamma,\alpha}(x,y)=A_\alpha(y)k(x,y)$, where \[A_{\alpha}(y)=\int_{\RR^L}e^{2\pi i y\cdot\eta}|\eta|^\alpha\chi(\eta)d\eta\]
and
\[k(x,y)=\int_{\RR^n}e^{2\pi i \left(x\cdot \xi + y\cdot \Psi(\xi)\right)}\phi(\xi)d\xi.\]

We set $\tilde{y}=My$ in the $y$ integral defining $k$, where $M=\diag(a_1,a_2,\ldots,a_{L_1},1,1,\ldots,1)$ is a diagonal matrix, to see that \[k(x,y)=\tilde{k}(x,My)\]where \[\tilde{k}(x,y)=\int_{\RR^n}e^{2\pi i \left(x\cdot \xi + \sum_{j=1}^{L_1}y_j|\xi|^{d_j}\right)}\phi(\xi)d\xi.\]

We denote by $\widetilde{S_0}$ a region with large $|\tilde{y}|\sim |M^{-1}\tilde{y}|$ where we can use the comparison $|A(\tilde{y})|\sim |\tilde{y}|^{-L-\alpha}\sim |M^{-1}\tilde{y}|^{-L-\alpha}\sim|A(M^{-1}\tilde{y})|=|A(y)|$. Let $S_0$ be the corresponding region under the change of variables $(x,\tilde{y})\mapsto (x,M^{-1}\tilde{y})$. We can make the change of variables $\tilde{y}=My$ in the $L^p$ integration of $K_{\Gamma,\alpha}(x,y)=k(x,y)A(y)$. The change of variables has a constant Jacobian. Using the comparison, it is simply verified that \[K_{\Gamma,\alpha}\in L^p(S_0) \iff K_{\widetilde{\Gamma},\alpha}\left(\widetilde{S_0}\right),\]
where $\widetilde{\Gamma}$ is a suitable small neighbourhood of the graph \[\left\lbrace (\xi,|\xi|^{d_1},|\xi|^{d_2},\ldots,|\xi|^{d_{L_1}},0,\ldots,0)\right\rbrace.\] It remains to consider mutual $L^p$ boundedness on the complements: with $S_1=S_0^c$ and $\widetilde{S_1}$ its image under the change of variables $\tilde{y} =My$ in the $y$ coordinate, we must show $K_{\Gamma,\alpha}\in L^p(S_1)\iff K_{\widetilde{\Gamma},\alpha}(\widetilde{S_1})$. In fact, one may routinely verify that $K_{\Gamma,\alpha}\in L^p(S_1)$ for all $1\leq p\leq \infty$ using an $L^\infty$ estimate on $K_{\Gamma,\alpha}$ and the method of non-stationary phase. Likewise, one may verify $K_{\Gamma,\alpha}\in L^p(\widetilde{S_1})$ for all $1\leq p\leq \infty$.

\end{proof}

\section{Sufficency of Theorem \ref{thm:suffKrad}; proof of \eqref{eq:suffKLp}}
\label{sec:suffKLp}
In this section we prove one part of Theorem \ref{thm:suffKrad}, the implication \eqref{eq:suffKLp}. That is the sufficient condition for the kernel $K$, \eqref{eq:kerProd}, to be in $L^p$. According with Lemma \ref{lem:ajAreOne}, we consider those $\Psi$, \eqref{eq:psiHom}, 
where we have\begin{equation}\label{eq:ensuringAjNice} a_j=1\mbox{, for }j\leq L_1,\,\ a_j=0,\mbox{ for }j>L_1\mbox{, and }d_1<d_2<\ldots<d_L.\end{equation}

We use the following lemma, this van de Corput estimate is essential to the proof of our main result. It is a simple corollary of the van de Corput estimates proved in \cite{arkhipovTrigSums}, the corollary may be proved exactly as for the corresponding classical van de Corput estimates for oscillatory integrals with smooth amplitude.

\begin{lemma}\label{lem:mixedVdc}Let $\Phi:\RR\rightarrow \RR$ be a polynomial of degree $d$ and $\phi$ be a smooth function with $\supp\phi\subset (a,b)$. Given the bound $\inf_{t\in [a,b]}\sum_{j=1}^{d}|\Phi^{(j)}(t)|^\frac{1}{j}\geq \kappa$ we have that \[\left|\int_a^b e^{2\pi i\Phi(t)}\phi(t)dt\right|\lesssim_{d}\min\left\lbrace (b-a),\kappa^{-1}\right\rbrace\|\phi'\|_{L^1}.\]In particular, we have that \[\left|\int_a^b e^{2\pi i\Phi(t)}\phi(t)dt\right|\lesssim\min\left\lbrace (b-a),\kappa^{-1}\right\rbrace(b-a)\|\phi'\|_{L^\infty}.\]\end{lemma}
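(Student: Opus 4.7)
The strategy is to handle the two halves of the minimum separately. Since $\phi$ is compactly supported in $(a,b)$, the fundamental theorem of calculus yields $\|\phi\|_{L^\infty}\leq \|\phi'\|_{L^1}$, and hence the trivial estimate $\left|\int_a^b e^{2\pi i\Phi}\phi\,dt\right|\leq (b-a)\|\phi\|_{L^\infty}\leq (b-a)\|\phi'\|_{L^1}$ already takes care of the regime $(b-a)\leq \kappa^{-1}$. It remains to establish the bound $\lesssim_d\kappa^{-1}\|\phi'\|_{L^1}$ in the complementary regime.

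For this I would invoke the Arkhipov-Karatsuba-Chubarikov amplitude-free estimate from \cite{arkhipovTrigSums}: under the hypothesis $\inf_{[a,b]}\sum_{j=1}^d|\Phi^{(j)}|^{1/j}\geq \kappa$, one has $\left|\int_a^b e^{2\pi i\Phi(t)}\,dt\right|\lesssim_d \kappa^{-1}$. The underlying mechanism is a pigeonhole argument: at each $t\in[a,b]$ some index $j(t)\in\{1,\ldots,d\}$ must satisfy $|\Phi^{(j)}(t)|\geq (\kappa/d)^j$, and because $\Phi^{(j)}$ is a polynomial of degree $d-j$ the set on which it is small consists of $O_d(1)$ connected components. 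One therefore decomposes $[a,b]$ into $O_d(1)$ subintervals on each of which a single $|\Phi^{(j)}|$ is bounded below (further subdividing in the $j=1$ case into intervals of monotonicity of $\Phi'$), and applies the classical single-derivative van der Corput on each piece.

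Promoting the amplitude-free estimate to the smooth-amplitude bound mirrors the standard argument from the classical theory: on each subinterval $I$ produced above one either applies the classical amplitude version of van der Corput directly, or integrates by parts against a primitive of $e^{2\pi i\Phi}$ whose $L^\infty(I)$ norm is controlled by $\kappa^{-1}$, obtaining the customary factor $\|\phi\|_{L^\infty(I)}+\|\phi'\|_{L^1(I)}$. Summing over the $O_d(1)$ subintervals and once again absorbing $\|\phi\|_{L^\infty}$ into $\|\phi'\|_{L^1}$ yields $\lesssim_d \kappa^{-1}\|\phi'\|_{L^1}$, while the \emph{in particular} statement follows at once from $\|\phi'\|_{L^1}\leq (b-a)\|\phi'\|_{L^\infty}$. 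The main point requiring care is uniformity in the pigeonhole subdivision, ensuring that the number of subintervals and all implicit constants depend only on $d$ and not on $\Phi$ itself; this is precisely where the polynomial structure is essential, capping the number of zeros of each $\Phi^{(j)}$ at $d-j$.
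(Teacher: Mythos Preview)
Your proposal is correct and matches the paper's own treatment: the paper does not give a proof, stating only that the lemma is a simple corollary of the van der Corput estimates in \cite{arkhipovTrigSums} which ``may be proved exactly as for the corresponding classical van de Corput estimates for oscillatory integrals with smooth amplitude.'' Your sketch---invoking the amplitude-free Arkhipov--Karatsuba--Chubarikov bound and then promoting to smooth $\phi$ via the standard integration-by-parts argument (using $\|\phi\|_{L^\infty}\le\|\phi'\|_{L^1}$ from compact support)---is precisely this.
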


We now proceed with the analysis of our surfaces. Henceforth, until stated otherwise, we denote by $\Phi(t)$
the phase appearing in certain one dimensional oscillatory integrals $I_{\pm}(x,y)=\int e^{2\pi i\Phi(t)}\phi_0(t)dt$. These phases are \[\Phi(t)=\Phi_{x,y,\pm}(t)=\pm |x|t+\sum_{j=1}^{L_1} y_j\psi_j(t),\]
with the $\psi_j(t)=|t|^{d_i}$. For $t\in[-\delta,\delta]$, we denote by $H_t(x,y)$ and $H(x,y)$ the quantities \[H_t(x,y)=\sum_{j=1}^{d_{L_1}}\left|\Phi^{(j)}(t)\right|^\frac{1}{j}\mbox{ and }H(x,y)=\inf_{t\in[-\delta,\delta]}H_t(x,y).\]

We will make use the following lemma.
\begin{lemma}\label{lem:HlowerBnd}
For $|y|\gtrsim 1$, $H(x,y)\gtrsim |y|^\frac{1}{d_{L_1}}$.
\end{lemma}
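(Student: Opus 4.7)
The plan is to pigeonhole on which coordinate of $y$ is largest, then examine the derivative of $\Phi$ of that precise order. Since $d_1<d_2<\cdots<d_{L_1}$, the $d_k$-th derivative ``kills'' all terms of lower index and leaves the term $y_k|t|^{d_k}$ as the dominant one, provided $\delta$ is small enough to absorb the higher-degree corrections.

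First I would pick an index $k\in\{1,\ldots,L_1\}$ with $|y_k|=\max_{i\leq L_1}|y_i|$, so that $|y_k|\gtrsim |y|$ (with implicit constant depending only on $L_1$). Next I would single out the $d_k$-th derivative of the phase $\Phi(t)=\pm|x|t+\sum_{i=1}^{L_1}y_i|t|^{d_i}$. The linear term $\pm|x|t$ is annihilated because $d_k\geq 2$, and on each of the intervals $(0,\delta)$ and $(-\delta,0)$ the function $|t|^{d_i}$ is a polynomial (in $t$ or in $-t$). Consequently, for indices $i<k$ one has $d_i<d_k$ and $(d/dt)^{d_k}|t|^{d_i}\equiv 0$, the term $i=k$ contributes $\pm d_k!\,y_k$, and for $i>k$ the contribution is a constant multiple of $y_i(\pm t)^{d_i-d_k}$, which is bounded in absolute value by $C\,|y_i|\,\delta^{d_i-d_k}\leq C\,|y_i|\,\delta^{d_{k+1}-d_k}$.

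Combining these observations on $t\in[-\delta,\delta]\setminus\{0\}$ gives
\[
\bigl|\Phi^{(d_k)}(t)\bigr|\ \geq\ d_k!\,|y_k|\ -\ C(L_1)\,\delta^{d_{k+1}-d_k}\max_{i>k}|y_i|\ \geq\ \tfrac12 d_k!\,|y_k|,
\]
once $\delta$ is chosen small enough (uniformly over the finitely many possible values of $k$). Since $|y_k|\gtrsim|y|$, this yields $|\Phi^{(d_k)}(t)|^{1/d_k}\gtrsim|y|^{1/d_k}\geq|y|^{1/d_{L_1}}$, where the last inequality uses the hypothesis $|y|\gtrsim 1$ together with $d_k\leq d_{L_1}$. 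Hence $H_t(x,y)\geq|\Phi^{(d_k)}(t)|^{1/d_k}\gtrsim|y|^{1/d_{L_1}}$ uniformly in $t$, and taking the infimum proves the lemma.

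The only mildly delicate point is making the choice of $\delta$ work uniformly across all candidate indices $k$ and both signs of $t$; since the set of possibilities is finite, choosing $\delta$ small enough to handle the worst case suffices. Everything else is an explicit calculation with polynomial derivatives, and the estimate is independent of $x$ precisely because $\pm|x|t$ is destroyed by any derivative of order $\geq 2$.
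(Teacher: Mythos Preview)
Your argument is correct and in fact more direct than the paper's. The paper introduces the auxiliary quantity $\widetilde{H^1}(x,y)=\max_{j}\inf_{t}|\Phi^{(j)}(t)|$, observes it is homogeneous of degree $1$, and runs a top--down cascade (the $d_{L_1}$-th derivative forces $y_{L_1}=0$, then the $d_{L_1-1}$-th forces $y_{L_1-1}=0$, and so on) to show $\widetilde{H^1}$ does not vanish on the sphere; a separate compactness/semicontinuity argument is then needed to upgrade ``nonzero'' to ``bounded below'', after which homogeneity gives $\widetilde{H^1}\gtrsim|(x,y)|$ and one transfers this back to $H$. Your approach bypasses all of this by selecting $k$ with $|y_k|$ maximal and computing $\Phi^{(d_k)}$ explicitly: the lower-order terms vanish, the top term is $d_k!\,y_k$, and the higher-order corrections are $O(\delta^{d_{k+1}-d_k})|y_k|$ because $\max_{i>k}|y_i|\le|y_k|$. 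This gives the bound with explicit constants and no soft compactness step. The only price is that you must take $\delta$ small enough (uniformly over the finitely many $k$), whereas the paper's argument works for any fixed $\delta$; since $\delta$ is at our disposal throughout the paper, this costs nothing here.
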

\begin{proof}
We relate $H$ to a homogeneous $\widetilde{H^1}$, where \[\widetilde{H^{1}}(x,y)=\max_{j=1,2,\ldots,d_{L_1}}\inf_{t\in [-\delta,\delta]}\left|\Phi_{x,y}^{(j)}(t)\right|.\]
We show that this, which is clearly homogeneous of degree $1$, is bounded positively away from $0$ on the sphere. We first show that it is non-vanishing on the sphere. 

Suppose that $\widetilde{H^{1}}(x,y)=0$. Then, in particular, $|y_{L_1}|\sim \inf_{t\in [a,b]}\left|\Phi^{(d_{L_1})}_{x,y}(t)\right|=0$ so that $y_{L_1}=0$. Considering the $d_{L_1-1}$th derivative in turn we find that $|y_{L_1-1}|\sim \inf_{t\in[-\delta,\delta]}\left|\Phi^{(d_{L_1-1})}\right|=0$. Continuing in this fashion shows that, if $\widetilde{H^{1}}(x,y)=0$, then $(x,y)=0$.

To show that $\inf_{(x,y)\in S^{N-1}}\widetilde{H^1}(x,y)>0$ we suppose by way of contradiction that there exists a sequence $z_n\in S^{N-1}$ with $\widetilde{H^1}(z_n)\rightarrow 0$. Passing to a subsequence if necessary we can assume that $z_n\rightarrow z\in S^{N-1}$. We see that \[\widetilde{H^1}(z_n)\geq\inf_{t\in I}\left|\Phi_{z_n}^{(j)}(t)\right|=\left|\Phi_{z_n}^{(j)}(t_n)\right|\geq \left|\Phi_{z}^{(j)}(t_n)\right|-\left|\Phi_{z}^{(j)}(t_n)-\Phi_{z_n}^{(j)}(t_n)\right|\]\[\geq \inf_{t\in I}\left|\Phi_{z}^{(j)}(t)\right|-C|z_n-z|.\]
Taking the maximum over $j$ shows that \[\widetilde{H^1}(z_n)\geq \widetilde{H^1}(z)-C|z_n-z|.\]
We know that $\widetilde{H^1}(z)\neq 0$ and $|z_n-z|\rightarrow 0$ so taking $n\rightarrow\infty$ we obtain a contradiction, $0>0$. Therefore we can conclude $\inf_{(x,y)\in S^{N-1}}\widetilde{H^1}(x,y)>0$ (in fact, using this, it is possible to show that $\widetilde{H^1}$ is continuous but this is not necessary for our proof). 
As a consequence of this and homogeneity we have that 
$\widetilde{H^{1}}(x,y)\gtrsim |(x,y)|\geq |y|$.

Now relating $H$ to the homogenous $\widetilde{H^1}$
, which is non-vanishing on the sphere, shows that $H(x,y)\gtrsim |y|^\frac{1}{d_{L_1}}$. Indeed, if $\widetilde{H^1}(x,y)\gtrsim 1$, which we have if $|y|\gtrsim 1$, then there exists $t^*=t^*(x,y)\in I$ with $H(x,y)=H_{t^*}(x,y)$. By an appropriate choice of $j_0$ we see that \[\left|y\right|^\frac{1}{d_{L_1}}\leq \left|(x,y)\right|^\frac{1}{d_{L_1}}\lesssim\left(\widetilde{H^1}(x,y)\right)^\frac{1}{d_{L_1}}
\lesssim\left(\widetilde{H^1}(x,y)\right)^\frac{1}{j_0}=\left(\inf_{t\in I}\left|\Phi^{(j_0)}(t)\right|\right)^\frac{1}{j_0}\]\[\leq\left|\Phi^{(j_0)}(t^*)\right|^\frac{1}{j_0}=\max_{j=1,\ldots d_{L_1}}\left|\Phi^{(j)}(t^*)\right|^\frac{1}{j}\leq\sum_{j=1}^{d_{L_1}}\left|\Phi^{(j)}(t^*)\right|^\frac{1}{j}=H_{t^*}(x,y)=H(x,y).\]
\end{proof}

We require the use of an asymptotic expansion before applying the van de Corput estimate. As such, we introduce a cutoff function $\eta$, chosen such that $\eta(r)=0$ for $r\leq \frac{1}{2}$ and $\eta(r)=1$ for $r\geq 1$.

\begin{lemma}\label{lem:vDCrad}
With $\Phi(r)=\Phi_{x,y,\pm}(r)=\pm |x|r+\sum_{j=1}^{L_1} y_j\psi_j(r)$. We define $H(x,y)=\inf_{r\in[-\delta,\delta]}\sum_{j=1}^{d_{L_1}}\left|\Phi^{(j)}(r)\right|^\frac{1}{j}$. If $H(x,y)\geq \kappa$, then we have that \[\left|\int e^{2\pi i\Phi(r)}\frac{1}{|rx|^{\frac{n-1}{2}}}\eta(|x|r)\phi_0(r)r^{n-1}dr\right|\lesssim \kappa^{-1}\frac{1}{|x|^\frac{n-1}{2}}.\]
\end{lemma}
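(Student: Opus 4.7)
The plan is to apply the mixed van der Corput estimate of Lemma \ref{lem:mixedVdc} with the phase $\Phi$ and the smoothness hypothesis $H(x,y)\geq\kappa$, bundling all factors other than $e^{2\pi i\Phi(r)}$ into a single smooth amplitude
\[
\varphi(r)=\frac{r^{(n-1)/2}}{|x|^{(n-1)/2}}\,\eta(|x|r)\,\phi_0(r),
\]
which is compactly supported in $[1/(2|x|),\delta]$. The cut-off $\eta$ confines $r$ away from the origin and hence removes any singularity coming from the factor $(r|x|)^{-(n-1)/2}$; moreover $\varphi\equiv 0$ unless $|x|\gtrsim 1/\delta$, so I would assume $|x|\gtrsim 1$ throughout. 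Applying Lemma \ref{lem:mixedVdc} on any interval containing the support of $\varphi$ (contained in $[-\delta,\delta]$, on which $H\geq\kappa$) reduces the problem to establishing
\[
\|\varphi'\|_{L^1}\lesssim |x|^{-(n-1)/2}.
\]

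To obtain this bound I would factor the constant $|x|^{-(n-1)/2}$ out of $\varphi$ and use the product rule to split the derivative of $g(r)=r^{(n-1)/2}\eta(|x|r)\phi_0(r)$ into three pieces. The piece $\tfrac{n-1}{2}r^{(n-3)/2}\eta(|x|r)\phi_0(r)$ vanishes for $n=1$ and for $n\geq 2$ is controlled in $L^1$ by $\int_0^\delta r^{(n-3)/2}\,dr$, which is finite and independent of $x$. The piece $|x|\,r^{(n-1)/2}\eta'(|x|r)\phi_0(r)$ is supported on the interval $[1/(2|x|),1/|x|]$, of length $\sim |x|^{-1}$, on which $r^{(n-1)/2}\sim |x|^{-(n-1)/2}$; the three factors multiply to $O(|x|^{-(n-1)/2})$, which is $O(1)$ under $|x|\gtrsim 1$. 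The piece $r^{(n-1)/2}\eta(|x|r)\phi_0'(r)$ is trivially $O(1)$ in $L^1$ by $\|\phi_0'\|_{\infty}\int_0^\delta r^{(n-1)/2}\,dr$. Summing, $\|g'\|_{L^1}=O(1)$ and hence $\|\varphi'\|_{L^1}\lesssim |x|^{-(n-1)/2}$.

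The main technical point is the scaling bookkeeping in the middle term: the factor of $|x|$ arising from differentiating the rescaled cut-off $\eta(|x|r)$ must cancel exactly against the $|x|^{-1}$ width of the support of $\eta'(|x|r)$ together with the $|x|^{-(n-1)/2}$ size of $r^{(n-1)/2}$ on that support, with the other two pieces requiring only that $r^{(n-3)/2}$ is integrable at the origin (which fails only at $n=1$, where the corresponding term is absent). Once this cancellation is recognised, combining the $L^1$ bound on $\varphi'$ with Lemma \ref{lem:mixedVdc} yields the estimate $\kappa^{-1}|x|^{-(n-1)/2}$ claimed.
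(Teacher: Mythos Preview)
Your proposal is correct and follows essentially the same argument as the paper: both define the amplitude $\varphi(r)=|x|^{-(n-1)/2}r^{(n-1)/2}\eta(|x|r)\phi_0(r)$, apply Lemma~\ref{lem:mixedVdc}, and bound $\|\varphi'\|_{L^1}$ by splitting the product-rule derivative into the same three pieces with the same scaling analysis. Your explicit remark that the integral vanishes unless $|x|\gtrsim 1/\delta$, and your separate treatment of $n=1$ versus $n\ge 2$ for the $r^{(n-3)/2}$ term, are slight clarifications beyond the paper's presentation but do not change the substance.
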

\begin{proof}
When we apply Lemma \ref{lem:mixedVdc} 
we see that, if $H(x,y)\geq \kappa$, then \[\left|\int_0^\infty e^{2\pi i\left(\pm r|x|+ \sum_{i=1}^{L_1}y_i\psi_i(r)\right)} \frac{1}{|rx|^{\frac{n-1}{2}}}\eta(|x|r)\phi_0(r)r^{n-1}dr\right|\lesssim_{d} \|\tilde{\phi}'\|_{L^1}\min\left\lbrace1,\kappa^{-1}\right\rbrace,\]
where $\tilde{\phi}(r)=\frac{1}{|rx|^{\frac{n-1}{2}}}\eta(|x|r)\phi_0(r)r^{n-1}=\frac{1}{|x|^{\frac{n-1}{2}}}\eta(|x|r)\phi_0(r)r^{\frac{n-1}{2}}$. We see that \[\tilde{\phi}'(r)=\frac{1}{|x|^\frac{n-1}{2}}\left(|x|\eta'(|x|r)\phi_0(r)r^{\frac{n-1}{2}}+\eta(|x|r)\phi_0'(r)r^{\frac{n-1}{2}}+\frac{n-1}{2}\eta(|x|r)\phi_0(r)r^{\frac{n-3}{2}}\right)\]
so that \[\|\tilde{\phi}'\|_{L^1}\leq\frac{1}{|x|^\frac{n-1}{2}}\left(\int\left||x|\eta'(|x|r)\phi_0(r)r^{\frac{n-1}{2}}\right|dr+ \int\left|\eta(|x|r)\phi_0'(r)r^{\frac{n-1}{2}}\right|dr\right)\]
\[+ \frac{1}{|x|^\frac{n-1}{2}}\int\left|\frac{n-1}{2}\eta(|x|r)\phi_0(r)r^{\frac{n-3}{2}}\right|dr\]\[\lesssim \frac{1}{|x|^\frac{n-1}{2}} \left(|x| |x|^{-\frac{n-1}{2}}|x|^{-1}+1+\left(1+|x|^{-\frac{n-1}{2}}\right)\right)\sim \frac{1}{|x|^\frac{n-1}{2}}.\]
\end{proof}

\begin{proof}[Proof of \eqref{eq:suffKLp}]
We prove that $k(x,y)(1+|(y,z)|)^{-L-\alpha}\in L^p(\RR^N,dxdydz)$ which, combined with bounds on $A_\alpha$, \eqref{eq:AalphBnd} and $|A_\alpha|\leq C$, gives the desired result. We show the result for $p\leq 2$, the full range of $p$ follows because $L^p\cap L^\infty\subset L^q$ for all $q\in [2,\infty]$. We will use the asymptotic expansion for the surface measure of the sphere.

We first establish the regions where the kernel $k(x,y)(1+|(y,z)|)^{-(L+\alpha)}$ contributes what can be considered error terms in the $L^p$ integration. The first of these is the region $R_{0}$, where $|x|\gg |y|$. The next region is $R_{-1}$, where $|x|\lesssim 1,|y|$. The remaining region, where $1\lesssim |x|\lesssim |y|$ and $|y|\gg 1$, is the main region $R$. Trivially, we always have the estimate $|k(x,y)|\leq C$.

In the region $R_{0}$, we consider the phase function \[\widetilde{\Phi_{x,y}}(t)=\frac{1}{|x|}\left(x\cdot t + y\cdot \Psi(t)\right),\] for which $\|\widetilde{\Phi_{x,y}}\|_{C^{M+1}(\overline{B(0,\delta)})}\lesssim_{M} 1$ and $\left|\nabla \widetilde{\Phi_{x,y}}(t)\right|\gtrsim 1$, for $|t|\leq\delta$. Therefore, by the non-Stationary Phase Lemma, \[\left|k(x,y)\right|=\left|\int e^{2\pi i |x|\widetilde{\Phi_{x,y}}(t)}\phi(t)dt\right|\lesssim C_M\frac{1}{|x|^M}.\] Integrating the resulting estimate we see \[\int\int\int_{R_{0}}|k(x,y)|^p(1+|(y,z)|)^{-p(L+\alpha)}dxdydz\]\[\lesssim  \int\int\int_{R_{0}}\frac{1}{(1+|x|)^{Mp}}\frac{1}{(1+|(y,z)|)^{p(L+\alpha)}}dxdydz,\] which is finite if, for example, $M>n$.

Next, we see for $R_{-1}$ that \[\int\int\int_{R_{-1}}|k(x,y)|^p(1+|(y,z)|)^{-p(L+\alpha)}dxdydz\]\[\lesssim \int_{|x|\lesssim 1}1dx\int\int(1+|(y,z)|)^{-p(L+\alpha)}dydz<\infty.\]

Using polar integration and the fact that $\Psi$ is radial with $\Psi(t)=\Psi_0(|t|)$ and $\phi(t)=\phi_0(|t|)$ we expand \[k(x,y)=\int e^{2\pi i(x\cdot t + y\cdot \Psi(t))}\phi(t)dt\]
 \[=\int\int e^{2\pi i(x\cdot r\omega+y\cdot \Psi_0(r))}\phi_0(r) d\sigma(\omega)r^{n-1}dr\]
\[=\int e^{2\pi i(y\cdot \Psi_0(r))}\int e^{2\pi i(rx\cdot \omega)} d\sigma(\omega)\phi_0(r)r^{n-1}dr\]
\[=\int e^{2\pi i(y\cdot \Psi_0(r))}\hat{\sigma}(rx)\phi_0(r)r^{n-1}dr.\]

For $n=1$ we have that $\hat{\sigma}(rx)=e^{2\pi ir|x|}+e^{-2\pi ir|x|}$. For $n>1$ we use the asymptotic expansion for $\hat{\sigma}$
\[\hat{\sigma}(rx)=\frac{1}{|rx|^{\frac{n-1}{2}}}\left(be^{2\pi ir|x|}+ae^{-2\pi ir|x|} +R_{0,\sigma}(rx)\right),\] for $|rx|\gtrsim 1$, where the $a$ and $b$ are constants and $|R_{0,\sigma}(rx)|\leq C|rx|^{-1}$; see, for example, Chapter 8 of \cite{bigStein}. To make use of this we introduce the cut-off function in $r$, $\eta$. We choose $\eta$ such that $\eta(r)=0$ for $r\leq \frac{1}{2}$ and $\eta(r)=1$ for $r\geq 1$. We see
\[k(x,y)\]
\[= E_1(x,y)+I(x,y),\] 
where \[ E_1(x,y)=\int_0^\infty e^{2\pi i\sum_{i=1}^Ly_i\psi_i(r)} \widehat{\sigma}(rx)(1-\eta(|x|r))\phi_0(r)r^{n-1}dr\]
and
\[I(x,y)=\int_0^\infty e^{2\pi i \sum_{i=1}^Ly_i\psi_i(r)} \widehat{\sigma}(rx)\eta(|x|r)\phi_0(r)r^{n-1}dr.\]

Let us look first to $I(x,y)$. Using the asymptotic expansion, we see that \[I(x,y)\]
\[= I_{+}(x,y)+I_{-}(x,y)+E_2(x,y),\]
where \[I_{+}(x,y)=b\int_0^\infty e^{2\pi i\left(r|x|+ \sum_{i=1}^Ly_i\psi_i(r)\right)} \frac{1}{|rx|^{\frac{n-1}{2}}}\eta(|x|r)\phi_0(r)r^{n-1}dr,\]
\[I_{-}(x,y)=a\int_0^\infty e^{2\pi i\left(-r|x|+ \sum_{i=1}^Ly_i\psi_i(r)\right)} \frac{1}{|rx|^{\frac{n-1}{2}}}\eta(|x|r)\phi_0(r)r^{n-1}dr,\]
\[E_2(x,y)=\int_0^\infty e^{2\pi i\left( \sum_{i=1}^Ly_i\psi_i(r)\right)} \frac{1}{|rx|^{\frac{n-1}{2}}}R_{0,\sigma}(rx)\eta(|x|r)\phi_0(r)r^{n-1}dr.\]

The main terms are those $I_+$ and $I_-$. The phases we consider are thus \[\Phi_{x,y,-}(r)=-r|x|+ \sum_{i=1}^{L_1}y_i\psi_i(r)\] and \[\Phi_{x,y,+}(r)=r|x|+ \sum_{i=1}^{L_1}y_i\psi_i(r).\] We estimate the contributions of the terms $I_+(x,y)$ and $I_-(x,y)$ separately. We apply Lemma \ref{lem:vDCrad} to analyse their contributions to the $L^p$ estimates. Since the analysis is the same in either case, we only present the argument for $I_+(x,y)$, with the relevant phase denoted by $\Phi=\Phi_{x,y}=\Phi_{x,y,+}$. We use the quantity $H(x,y)=\inf_{r\leq \delta}\sum_{j=1}^{d_{L_1}}\left|\frac{\Phi_{x,y}^{(j)}(r)}{j!}\right|^\frac{1}{j}$.

To carry out the $L^p$ integration of $I_+(x,y)(1+|(y,z)|)^{-(L+\alpha)}$, we perform a dyadic partition on scales $H\sim 2^l$ and $|y|\sim 2^m$. By Lemma \ref{lem:HlowerBnd} we have $H(x,y)\gtrsim |y|^\frac{1}{d_{L_1}}$. It follows that, in $R$, $l,m\geq 0$, provided $|y|\gg 1$ with a large enough constant. When $H(x,y)\sim 2^l$, we have as a consequence of Lemma \ref{lem:vDCrad} that $|I_+(x,y)|^p\lesssim \frac{1}{|x|^{p\frac{n-1}{2}}}2^{-lp}$. Thus we see
\[\int\int\int\left|I_{+}(x,y)\right|^p\frac{1}{(1+|(y,z)|)^{p(L+\alpha)}}dxdydz\]
\[\lesssim\sum_{m\geq 0}\sum_{l\geq 0}2^{-lp}\int\int\int_{2^{l-1}<H(x,y)\leq 2^l;|x|,|z|\lesssim|y|\sim 2^m}\frac{1}{|x|^{p\frac{n-1}{2}}}\frac{1}{(1+|y|)^{p(L+\alpha)}}dxdydz\]
\[+\sum_{m\geq 0}\sum_{l\geq 0}2^{-lp}\int\int\int_{2^{l-1}<H(x,y)\leq 2^l; |x|\lesssim|y|\sim 2^m\lesssim |z|}\frac{1}{|x|^{p\frac{n-1}{2}}}\frac{1}{(1+|z|)^{p(L+\alpha)}}dxdydz\]
\[\lesssim \sum_{m\geq 0}\sum_{l\geq 0}2^{-lp}2^{-p(L+\alpha)}2^{L_1m}\int\int_{2^{l-1}<H(x,y)\leq 2^l, |x|\lesssim|y|\sim 2^m }\frac{1}{|x|^{p\frac{n-1}{2}}}dxdy\]
\[=\sum_{m\geq 0}\sum_{l\geq 0}2^{-lp}2^{-mp(L+\alpha)}2^{L_1m}J_{l,m},\]
where \[J_{l,m}=\int\int_{2^{l-1}<H(x,y)\leq 2^l; |x|\lesssim|y|\sim 2^m}\frac{1}{|x|^{p\frac{n-1}{2}}}dxdy.\]
We split the sum into two and write \[\sum_{m\geq 0}\sum_{l\geq 0}2^{-lp}2^{-mp(L+\alpha)}2^{L_1m}J_{l,m}=S_1+S_2,\]
where 
\[S_1=\sum_{l\geq 0}2^{-pl}\sum_{m\geq 2l,0}2^{-pm(L+\alpha)}2^{L_1m}J_{l,m}\]
and \[S_2=\sum_{l\geq 0}2^{-pl}\sum_{2l> m\geq 0}2^{-pm(L+\alpha)}2^{L_1m}J_{l,m}.\]

We provide two different estimates on the size of $J_{l,m}$. The first estimate is the trivial estimate \[J_{l,m}\leq\int\int_{|y|\lesssim 2^m,|x|\lesssim 2^m}\frac{1}{|x|^{p\frac{n-1}{2}}}dxdy\]
\begin{equation}\label{eq:JlmTriv}\leq 2^{mL} 2^{mn-p\left(\frac{n-1}{2}\right)m}.\end{equation}

The second estimate requires a little more care and it is here we work by the method used in \cite{arkhipovNoGaps}. We now work via polar integration.

Since $H(x,y)\sim 2^l$ we show that there exists \[r_j\in\left\lbrace \frac{1}{2^l}[-2^l\delta],\frac{1}{2^l}[-2^l\delta]+\frac{1}{2^l},\ldots,\frac{1}{2^l}[2^l\delta]-\frac{1}{2^l},\frac{1}{2^l}[2^l\delta]\right\rbrace\] such that $\left|\Phi'(r_j)\right|\lesssim 2^l$. We now choose $r_j=\frac{1}{2^l}\left[2^lr^*\right]$, where \[\sum_{j=1}^{d_{L_1}}\left|\Phi'(r^*)\right|^\frac{1}{j}=\inf_{r\in [-\delta,\delta]}\sum_{j=1}^{d_{L_1}}\left|\Phi'(r)\right|^\frac{1}{j}.\] 

 We have that \[\Phi'(r_j)=\sum_{i=0}^{d_{L_1}-1} \frac{1}{i!}\Phi^{(1+i)}(r^*)(r_j-r^*)^i.\]
We use the inequality $|r_j-r^*|\leq 2^{-l}$ and also $H(x,y)\sim 2^l$ so that $|\Phi^{(i)}(r^*)|\lesssim 2^{li}$ and \[\left|\Phi'(r_j)\right|\lesssim \sum_{i=0}^{d_{L_1}-1} 2^{l(i+1)}2^{-li}\sim 2^l.\]

Now we make the change of variables $s\mapsto \tilde{s}=\Phi'(r_j)=|x|+\sum_{i=1}^{L_1}y_i\psi_i'(r)$, where $s=|x|$ in the following polar integration. The change of variables has unit Jacobian. We denote by $e_1$ the vector $(1,0,\ldots,0)\in \RR^n$ and see that
\[J_{l,m}=\int\int\int_{2^{l-1}<H(s\omega,y)\leq 2^l, |y|\sim 2^m, s\lesssim |y|}\frac{1}{s^{p\frac{n-1}{2}}}d\sigma(\omega)s^{n-1}dsdy\]
\[\lesssim\sum_{j=[-2^l\delta]}^{[2^l\delta]}\int\int\int_{\left|\Phi_{s\omega,y}'(r_j)\right|\lesssim 2^l, |y|\lesssim 2^m, }\frac{1}{s^{p\frac{n-1}{2}}}d\sigma(\omega)s^{n-1}dsdy\]
\[\sim \sum_{j=[-2^l\delta]}^{[2^l\delta]}\int\int_{\left|\Phi_{se_1,y}'(r_j)\right|\lesssim 2^l, |y|\lesssim 2^m, }s^{n-1-p\frac{n-1}{2}}dsdy\]
\[\sim \sum_{j=[-2^l\delta]}^{[2^l\delta]}\int\int_{|\tilde{s}|\lesssim 2^l,|y|\lesssim 2^m}
\left|\tilde{s}-\sum_{i=1}^{L_1}y_i\psi_i'(r_j)\right|^{n-1-p\frac{n-1}{2}}d\tilde{s}dy\]
\[\lesssim \sum_{j=[-2^l\delta]}^{[2^l\delta]}\int\int_{|\tilde{s}|\lesssim 2^l,|y|\lesssim 2^m}\left|\tilde{s}^{n-1-p\frac{n-1}{2}}+\left|\sum_{i=1}^{L_1}y_i\psi_i'(r_j)\right|^{n-1-p\frac{n-1}{2}}\right|
d\tilde{s}dy\]
\[\lesssim 2^l \int_{|y|\lesssim2^m}\left(2^{l\left(n-p\frac{n-1}{2}\right)}+2^l2^{m\left(n-1-p\frac{n-1}{2}\right)}\right)dy
\]
\[\lesssim 2^{2l} 2^{L_1m}\left(2^{l\left(n-1-p\frac{n-1}{2}\right)}+2^{m\left(n-1-p\frac{n-1}{2}\right)}\right),
\]

We use this last estimate in the case that $2l\leq m$, in which case we see that \begin{equation}\label{eq:JlmEst2}J_{l,m}\lesssim 2^{2l} 2^{L_1m}2^{m\left(n-1-p\frac{n-1}{2}\right)}.\end{equation}

We use this bound to see that
\[S_1=\sum_{l\geq 0}2^{-pl}\sum_{m\geq 2l,0}2^{-pm(L+\alpha)}2^{L_1m}J_{l,m}\]
\[\lesssim \sum_{m\geq 0}2^{-pm(L+\alpha)}2^{Lm}2^{m\left(n-1-p\frac{n-1}{2}\right)}\sum_{0\leq l \leq \frac{m}{2}}2^{2l}2^{-pl}\]
\[\lesssim \sum_{m\geq 0}2^{-pm(L+\alpha)}2^{Lm}2^{m\left(n-1-p\frac{n-1}{2}\right)}2^{m}2^{-p\frac{m}{2}},\]
which is finite if $p>\frac{L+n}{L+\alpha+\frac{n}{2}}$.

Using the trivial bound \eqref{eq:JlmTriv} we see that
\[S_2=\sum_{l\geq 0}2^{-pl}\sum_{2l> m\geq 0}2^{-pm(L+\alpha)}2^{L_1m}J_{l,m}\] 
\[=\sum_{m\geq 0}2^{-pm(L+\alpha)}2^{mL} 2^{mn-p\left(\frac{n-1}{2}\right)}\sum_{2l> m\geq 0}2^{-pl}\] 
\[\lesssim\sum_{m\geq 0}2^{-pm(L+\alpha)}2^{mL} 2^{mn-p\left(\frac{n-1}{2}\right)}2^{-p\frac{m}{2}},\] 
which is finite if $p>\frac{L+n}{L+\alpha+\frac{n}{2}}$.

The analysis may be repeated for the contribution of $I_-(x,y)$.

It remains to consider the contribution of the error terms. For $E_2$, we have that
\[|E_2(x,y)|=\left|\int_0^\infty e^{2\pi i\left( \sum_{i=1}^Ly_i\psi_i(r)\right)} \frac{1}{|rx|^{\frac{n-1}{2}}}R_{0,\sigma}(rx)\eta(|x|r)\phi_0(r)r^{n-1}dr\right|\]
\[\leq C\frac{1}{|x|^{\frac{n-1}{2}}}\frac{1}{|x|}\int_{\frac{1}{2}|x|^{-1}}^1 r^{-1}r^{\frac{n-1}{2}}dr\sim\frac{1}{|x|^{\frac{n+1}{2}}}.\]

For $E_1$, we find
\[|E_1(x)|= \left|\int_0^\infty e^{2\pi i\sum_{i=1}^Ly_i\psi_i(r)} \widehat{\sigma}(rx)(1-\eta(|x|r))\phi_0(r)r^{n-1}dr\right|\]
\[\lesssim\int_0^{|x|^{-1}}  r^{n-1}dr\sim \frac{1}{|x|^n}.\] 

Since we know in either case that $|x|\gtrsim 1$, we see $\left|E_i(x,y)\right|\lesssim \frac{1}{|x|^\frac{n+1}{2}}$ for each $i$.

Of course we also have the bound $|E_i(x,y)|\leq C$ for each $i$. Performing polar integration, we find that \[\|E_i(x,y)(1+|(y,z)|)^{-L-\alpha}\|^p_{L^p(R,dxdydz)}\]\[\lesssim\int\int(1+|(y,z)|)^{-p(L+\alpha)}(1+|x|)^{-p\frac{n+1}{2}}dxdydz\sim \int_1^\infty r^{(n+L-1)-p(L+\alpha+\frac{n+1}{2})}dr,\] which is finite if $p>\frac{n+L}{L+\alpha+\frac{n+1}{2}}$. In particular, this is true if $p>\frac{n+L}{L+\alpha+\frac{n}{2}}$.
\end{proof}

\section{Necessity of Theorem \ref{thm:suffKrad}; proof of \eqref{eq:necKLp}} \label{sec:necMinMp}

According with Lemma \ref{lem:ajAreOne}, we consider those $\Psi$, \eqref{eq:psiHom}, where we have\begin{equation} a_j=1\mbox{, for }j\leq L_1,\,\ a_j=0,\mbox{ for }j>L_1\mbox{, and }d_1<d_2<\ldots<d_L.\end{equation}

We turn to the proof of Theorem \ref{thm:kLpNecRadial}, necessary condition for certain Bochner-Riesz kernels to be in $L^p$. We use the product expression, \eqref{eq:kerProd}, for the kernel $K_{\Gamma,\alpha}$. Since we have the lower bound $|A_{\alpha}(y,z)| \ge c_\alpha |(y,z)|^{-\alpha - L}$ for large $|(y,z)|$, it suffices to establish a lower
bound for the oscillatory integral $k(x,y)$ in a sufficiently large region. In fact we prove the following.

\begin{lemma}\label{lem:klowerEst}
The region $R$ is given as
\begin{equation}\label{eq:kLowerEst}\begin{split}
R&=\left\lbrace(x,y)\in\RR^{n+L};\frac{\delta_1}{2}\leq\frac{|x|}{|y_1|}\leq \delta_1, \frac{|y_i|}{|y_1|}\leq \delta_1\mbox{ for }i\geq 2,\vphantom{y_1\geq E, y_i\geq 1,\mbox{ and }|x|\geq 1}\right.\\
  &\quad \quad \left.\vphantom{\frac{|y_i|}{|y_1|}} y_1\geq E, y_i\geq 1\mbox{ for }i\geq 2, |x|\geq 1\right\rbrace,
\end{split}\end{equation}
the parameter $\delta_1$ is some sufficiently small constant, the parameter $E$ is some sufficiently large constant. For $(x,y)\in R$, $|k(x,y)|\gtrsim |x|^{-\frac{n}{2}}$. 
\end{lemma}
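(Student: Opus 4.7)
The strategy is to exploit the radial symmetry of $\Psi$ to reduce $k(x,y)$ to a one-dimensional oscillatory integral, locate a non-degenerate stationary point inside the support of $\phi_0$, and extract its contribution via stationary phase. Passing to polar coordinates and using $\int_{S^{n-1}} e^{2\pi i r x\cdot \omega}\,d\sigma(\omega) = \widehat{\sigma}(rx)$, I would write
\[
k(x,y) = \int_0^\infty e^{2\pi i \sum_{j=1}^{L_1} y_j\, r^{d_j}}\,\widehat{\sigma}(rx)\,\phi_0(r)\,r^{n-1}\,dr,
\]
and then, exactly as in Section~\ref{sec:suffKLp}, insert the cut-off $\eta(|x|r)$ and the asymptotic $\widehat{\sigma}(rx) = |rx|^{-(n-1)/2}(b e^{2\pi i r|x|} + a e^{-2\pi i r|x|}) + O(|rx|^{-(n+1)/2})$, giving the decomposition $k = I_+ + I_- + E_1 + E_2$ with phases $\Phi_\pm(r) = \pm r|x| + \sum_{j=1}^{L_1} y_j r^{d_j}$.

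Next I would show that all but one of these four terms is of order strictly smaller than $|x|^{-n/2}$. For $E_1$ (supported in $|rx| \lesssim 1$) and $E_2$ (from the $O(|rx|^{-(n+1)/2})$ remainder), the trivial bounds already used in Section~\ref{sec:suffKLp} give $|E_1| + |E_2| \lesssim |x|^{-(n+1)/2}$, which is $o(|x|^{-n/2})$ since $|x|\ge 1$ and in fact $|x|\gtrsim E$ in $R$. For $I_+$, the key observation is that
\[
\Phi_+'(r) = |x| + \sum_{j=1}^{L_1} d_j y_j r^{d_j-1} \gtrsim |x|
\]
throughout the support of $\phi_0$: the $j=1$ contribution has the same sign as $|x|$, and the remaining terms are dominated by $\delta_1 |y_1|\cdot r^{d_j-1} \lesssim \delta_1 |x|$, which is absorbed for small $\delta_1$. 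Integration by parts (or Lemma~\ref{lem:mixedVdc} with $\kappa\sim |x|$), together with the amplitude factor of size $|x|^{-(n-1)/2}$, yields $|I_+|\lesssim |x|^{-(n+1)/2}$.

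The main step is isolating the dominant stationary phase contribution of $I_-$. The equation $\Phi_-'(r)=0$ reads
\[
d_1 y_1 r^{d_1-1}\Bigl(1 + \sum_{j\ge 2} \tfrac{d_j}{d_1}\tfrac{y_j}{y_1} r^{d_j-d_1}\Bigr) = |x|.
\]
Since $|x|/y_1\in[\delta_1/2,\delta_1]$ and the bracketed factor is $1+O(\delta_1)$ uniformly on the support of $\phi_0$, a standard implicit function argument produces a unique critical point $r^* = r^*(x,y)$ lying strictly inside $\mathrm{supp}\,\phi_0$ (away from the origin and the boundary) for $\delta_1$ small. Moreover $\Phi_-''(r^*) = d_1(d_1-1) y_1 (r^*)^{d_1-2}(1+O(\delta_1))$ is nondegenerate, of size $\sim y_1 \sim |x|/\delta_1$. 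The 1D method of stationary phase applied to $I_-$ then gives a leading term of absolute value
\[
\frac{C(r^*)^{(n-1)/2}}{|x|^{(n-1)/2}\sqrt{|\Phi_-''(r^*)|}} \sim \frac{C(\delta_1)}{|x|^{n/2}},
\]
with an error of order $|\Phi_-''|^{-3/2}$ relative to the main term, which is $\lesssim y_1^{-1}\lesssim E^{-1}$. Taking $E$ sufficiently large swallows this error, and combining with the bounds on $I_+$, $E_1$, $E_2$ gives $|k(x,y)|\gtrsim |x|^{-n/2}$ in $R$.

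The main obstacle I anticipate is making the last step uniform: I must verify that $r^*$ stays in a fixed compact subset of $\mathrm{supp}\,\phi_0$ throughout $R$, that the sign and size of $\Phi_-''(r^*)$ are controlled independently of $(x,y)\in R$, and that the standard stationary phase remainder (which depends on higher derivatives of the amplitude and of $\Phi_-$) is genuinely dominated by the main term. All of this is straightforward provided one first fixes $\delta_1$ small enough so that the $j\ge 2$ terms behave as genuine perturbations, and then fixes $E$ large enough depending on $\delta_1$.
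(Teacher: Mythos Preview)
Your proposal is correct and follows essentially the same approach as the paper: reduce to a one-dimensional integral via polar coordinates and the asymptotic for $\widehat{\sigma}$, dispose of the cut-off piece and the remainder by size, kill the $I_+$ term by non-stationary phase (using $y_1>0$), and extract the main contribution from $I_-$ by stationary phase at a unique nondegenerate critical point. The only cosmetic difference is that the paper first rescales $r=\sigma s$ with $\sigma^{d_1-1}=|x|/y_1$ to place the critical point at $s_*\sim 1$ before invoking stationary phase, whereas you locate $r^*$ directly and track the $\delta_1$-dependent constants; both routes yield the same bound.
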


We will now show the resulting necessary conditions for $K_{\Gamma,\alpha}\in L^p$, Theorem \ref{thm:kLpNecRadial}. 

 \begin{proof}[Proof of \eqref{eq:necKLp}]
 In the case $p< 2$ we see, using the bounds on $|A_\alpha|$ from Lemma \ref{lem:Abound},
\[\int\int\int_{(x,y)\in R}|K_{\Gamma,\alpha}(x,y,z)|^pdxdydz\gtrsim\]\[ \int\int_E^{\min\lbrace E,|z|\rbrace} (1+|z|)^{-p(L+\alpha)}|\delta_1 y_1|^{L_1-1}\int_{\frac{\delta_1y_1}{2}\leq r\leq \delta_1 y_1}r^{-\frac{pn}{2}+{n-1}}drdy_1dz\]
\[+\int_E^{\infty} y_1^{-p(L+\alpha)}|\delta_1 y_1|^{L_1-1}y_1^{L_2}\int_{\frac{\delta_1y_1}{2}\leq r\leq \delta_1 y_1}r^{-\frac{pn}{2}+{n-1}}drdy_1\]
\[\sim\int_E^\infty y_1^{-p(L+\alpha)-\frac{pn}{2}+n+L-1}dy_1\]
Which is finite if and only if 
\[L-\frac{pn}{2}-p(L+\alpha)+n<0
\iff \frac{L+n}{p}-\frac{L+n}{2}-\frac{L}{2}<\alpha.\]
\end{proof}

\begin{proof}[Proof of Lemma \ref{lem:klowerEst}]
We chose our polynomial to be symmetric so we know that $d_1$ is even. This will allow us to find a unique critical point of the relevant phase function.

Using polar coordinates, we have
$$
K(x,y) = \int_0^{\infty} e^{2\pi i \Psi_y(r)} r^{n-1} \psi(r) \Bigl[
 \int_{{\mathbb S}^{n-1}} e^{2\pi i r x\cdot \omega} \, d\sigma(\omega) \Bigr] \, dr
$$
where $\Psi_y(r) = y_1 r^{d_1} + y_2 r^{d_2} + \cdots$ and the inner integral is the Fourier transform 
${\hat{\sigma}(r x)}$ of the surface measure $\sigma$ on ${\mathbb S}^{n-1}$. 
Our aim is to decompose
$k(x,y) = M(x,y) + E(x,y)$ where the error term $E$ satisfies $|E(x,y)| \le \epsilon |x|^{-n/2}$ for arbitrarily small
$\epsilon > 0$ and the main term $M$ satisfies $|M(x,y)| \gtrsim |x|^{-n/2}$ on $R$. Of course, this will prove Lemma \ref{lem:klowerEst}.

We first decompose $k= k_1 + E_1$, where 
$$
E_1(x,y) \ = \ \int_{0}^{|x|^{-1}}  e^{2\pi i \Psi_y(r)} {\hat{\sigma}(r x)} r^{n-1} \psi(r) \, dr
$$
so that 
\begin{equation}\label{2}
|E_1(x,y)| \ \lesssim \ |x|^{-n} \ \le \ \frac{\epsilon}{3} |x|^{-n/2} \ \ {\rm on} \ R.
\end{equation} 
In the case $n\ge 2$, for $k_1$ where $r \ge |x|^{-1}$,
we use the following well-known asymptotic formula (see, for example, Chapter $8$ of \cite{bigStein}); for $r|x| \ge 1$, 
$$
{\hat{\sigma}}(rx) \ = \ a \, \frac{e^{- 2\pi i r |x|}}{(r|x|)^{(n-1)/2}} \ + \ b \, \frac{e^{2\pi i r|x|}}{(r|x|)^{(n-1)/2}} \ + \ R_\sigma(r|x|)
$$
where $a, b \not= 0$ and $|R_\sigma(r|x|)| \lesssim (r|x|)^{-(n+1)/2}$. In the case $n=1$ this holds trivially with $a=b=1$ and $R_\sigma=0$, since the measure is the counting measure on $\lbrace -1,1\rbrace$. This gives a corresponding decomposition
for $k_1 = k_{1,1} + E_2 + E_3$. We have
\begin{equation}\label{1,3}
|E_3(x,y)| \  \lesssim \ |x|^{-(n+1)/2} \int_0^1 r^{(n-3)/2} dr \ \le \ \frac{\epsilon}{3} |x|^{-n/2} \ \ {\rm on} \ \  R,
\end{equation}
provided $E$ is chosen large enough, since $|x|\geq \delta_1|y_1|/2\geq \delta_1 E/2$.
Furthermore,
$$
E_2(x,y) \ = \ b \, \frac{1}{|x|^{(n-1)/2}} \, \int_{|x|^{-1}}^{\infty} e^{2\pi i\left( |x| r + \Psi_y(r)\right)} r^{(n-1)/2} \psi(r) \, dr
$$
and since $y_1 \ge 0$,
$$
| |x| + \Psi_y ' (r)  | = | |x| + k_1 y_1 r^{d_1 -1} + O(\delta y_1 r^{d_1 - 1}) | \ge |x| + d_1 y_1 r^{d_1 -1} (1 - C \delta) \ge |x|
$$
for small enough $\delta > 0$. Hence integrating by parts shows that
\begin{equation}\label{1,2}
|E_2(x,y)| \ \lesssim \ |x|^{(n+1)/2} \ \le \ \frac{\epsilon}{3} |x|^{-n/2} \ \ {\rm on} \ \ R.
\end{equation}
Finally, we have
$$
k_{1,1}(x,y) \ = \ a \, \frac{1}{|x|^{(n-1)/2}} \int_{|x|^{-1}}^{\infty} e^{2\pi i \left(- |x| r + \Psi_y(r)\right)} r^{(n-1)/2} \psi(r) dr.
$$
We make the change of variables $r = \sigma s$ where $\sigma^{d_1 - 1} = |x|/y_1 \sim \delta_1$ so that
$$
k_{1,1}(x,y) \ = \ g(x,y) \sqrt{\lambda} \int_{\lambda^{-1}}^{\infty} e^{2\pi i \lambda \Phi_{x,y}(s)} s^{(n-1)/2} \psi(\sigma s) \, ds
$$
where $\lambda = \sigma |x| \gg 1$ and $g(x,y) = a |x|^{-n/2} \sigma^{n/2} \sim_{\delta_1} |x|^{-n/2}$ on $R$. 
Here the phase is given by
$$
\Phi_{x,y}(s) \ = \ -s + s^{d_1} + \epsilon_2 s^{d_2} + \cdots + \epsilon_L s^{d_L}
$$
where $\epsilon_j = \epsilon_j(x,y) = O(\delta)$ on $R$. Hence the phase $\Phi_{x,y}(s)$ has
a unique nondegenerate critical point at $s_{*} \sim 1$ and so the classical stationary phase argument (see, for example, \cite{mythesis}) shows
that
$$
\Bigl|  \int_{\lambda^{-1}}^{\infty} e^{2\pi i \lambda \Phi_{x,y}(s)} s^{(n-1)/2} \psi(\sigma s) ds \Bigr| \ \sim \ \lambda^{-1/2},
$$
implying that 
\begin{equation}\label{1,1}
|k_{1,1}(x,y)| \ \gtrsim \ |x|^{-n/2} \ \ {\rm on} \ \ R.
\end{equation}

Hence \eqref{2}, \eqref{1,3}, \eqref{1,2} and \eqref{1,1} show that we can write $k(x,y) = M(x,y) + E(x,y)$
where $|E(x,y)| \le \epsilon |x|^{-n/2}$ and $|M(x,y)| \gtrsim |x|^{-n/2}$ on $R$ as desired.
This finishes the proof of Lemma \ref{lem:klowerEst} and hence the necessity of Theorem \ref{thm:kLpNecRadial}.
\end{proof}

\bibliographystyle{plain}
\bibliography{GBROreferences}
\end{document}